\documentclass[a4paper,11pt,reqno]{amsart}

\usepackage[T1]{fontenc}
\usepackage{lmodern,mathrsfs}
\usepackage{amsmath,amssymb,amsthm,mathtools,geometry,hyperref,microtype,xcolor}
\allowdisplaybreaks[2]
\definecolor{DarkBlue}{rgb}{0.08,0.12,0.55}
\definecolor{DarkRed}{rgb}{0.55,0.08,0.08}
\hypersetup{colorlinks=true,linkcolor=DarkBlue,citecolor=DarkRed,urlcolor=DarkBlue,
 pdftitle={A Curvature Gap for Minimal Submanifolds in Spheres},
 pdfauthor={Fagui Li and Yuhang Zhao}}

\numberwithin{equation}{section}
\newtheorem{theorem}{Theorem}[section]
\newtheorem{proposition}[theorem]{Proposition}
\newtheorem{lemma}[theorem]{Lemma}

\newtheorem{remark}[theorem]{Remark}
\newtheorem*{acknow}{Acknowledgments}

\newcommand{\Sn}{\mathbb S}
\newcommand{\dd}{\,d\mu}
\newcommand{\ip}[2]{\langle #1,#2\rangle}

\title[A Curvature Gap for Minimal Submanifolds in Spheres]{A Curvature Gap for Minimal Submanifolds in Spheres}

\author[F. G. Li]{Fagui Li}
\address{Frontier Interdisciplinary Domain, Beijing Institute of Technology, Zhuhai, Guangdong 519088, P. R. China}
\email{lifagui@bitzh.edu.cn}

\author[Y. Zhao]{Yuhang Zhao${}^{*}$}
\address{School of Mathematics, Nanjing University, Nanjing 210093, P. R. China}
\email{yuhangzhao@smail.nju.edu.cn}

\subjclass[2020]{53C20, 53C24, 53C42}
\keywords{Minimal submanifold, sphere, higher codimension, Chern conjecture, second fundamental form, curvature gap}
\thanks{$^{*}$ Corresponding author.}
\thanks{F. G. Li is partially supported by NSFC (No. 12271040 and 12501061), the Guangdong Provincial Association for Science and Technology Youth Talent Support Program (No. SKXRC2026413) and the Research Start-up Funding of Beijing Institute of Technology (No. 5640011253301).}
\begin{document}

\begin{abstract}
Let $F:M^n\to\Sn^{n+q}(1)$ be a closed  minimal immersion in the
unit sphere, with $n\ge3$, $q\ge2$. Let $S$ be the squared length of its second fundamental
form.
We prove that if $M$ is not totally geodesic, then
$$
 \max_M S>
 \frac{n(288\sqrt6\,n-288)}{(288\sqrt6+192)n-137}
 >\frac{2n}{3}+\frac{31-9\sqrt6}{75}(n-2).
$$
\end{abstract}

\maketitle

\section{Introduction}
Let
$
 F:M^n\longrightarrow \Sn^{n+q}(1)
$
be a closed connected minimal immersion in the unit sphere, with second fundamental form $h$ and
$
 S=|h|^2.
$
The basic identity is Simons' formula
\begin{equation*}
 \frac12\Delta S
 =|\nabla h|^2+nS
 -\sum_{\alpha,\beta}\langle A_\alpha,A_\beta\rangle^2
 -\sum_{\alpha,\beta}|[A_\alpha,A_\beta]|^2.
\end{equation*}
For codimension $q\ge2$, the Li--Li matrix inequality~\cite{LiLi1992}, closely related to the DDVV inequality~\cite{GeTang2008,Lu2011}, gives
\begin{equation*}
 \sum_{\alpha,\beta}\langle A_\alpha,A_\beta\rangle^2
 +\sum_{\alpha,\beta}|[A_\alpha,A_\beta]|^2
 \le\frac32S^2,
\end{equation*}
and hence the universal first-pinching value $2n/3$.
The classical first-gap theorem of Simons, Chern--do Carmo--Kobayashi, Li--Li, and Chen--Xu says that $S\le2n/3$ forces the totally geodesic case, except for the two-dimensional Veronese equality model.  Thus, when $n\ge3$, the classical theorem gives the qualitative statement
\[
 S_{\max}:= \max_M S>\frac{2n}{3}.
\]
Our purpose is to make this strict inequality uniform and explicit, with a remainder that grows linearly in $n-2$ and is independent of the higher codimension $q\ge2$.

The pinching theory for minimal submanifolds in spheres originates in Simons~\cite{Simons1968}.  The equality cases of the first gap were analyzed by Lawson~\cite{Lawson1969} and Chern, do Carmo, and Kobayashi~\cite{ChernDoCarmoKobayashi1970}, while Yau~\cite{Yau1975} sharpened the codimension-two picture.  Li and Li~\cite{LiLi1992} and Chen and Xu~\cite{ChenXu1993} established the universal higher-codimensional threshold $2n/3$.  Lu~\cite{Lu2011} later introduced the object we shall call Lu's fundamental matrix, used its second eigenvalue to refine the higher-codimensional rigidity problem, and developed the optimal matrix inequalities underlying the Chern--Lu program.  The DDVV equality structure is closely related; see Ge and Tang~\cite{GeTang2008} and Lu~\cite{Lu2011}.  

To place the present result in the classical Chern program, recall that the Gauss equation gives
\begin{equation*}
 \operatorname{Scal}_M=n(n-1)-S
\end{equation*}
for a minimal immersion into the unit sphere.  Thus constant scalar curvature is equivalent to constant $S$.  In the hypersurface case, Chern's conjecture is commonly formulated as the discreteness of the possible constant scalar curvatures; a widely used refined form predicts that every closed minimal hypersurface with constant scalar curvature is isoparametric, as discussed for example by Ge and Tang~\cite{GeTangChern2010}, Xu and Xu~\cite{XuXu2017,XuXu2024}.  The first nonzero pinching value is $S=n$, realized by the Clifford hypersurfaces, and the classical second-gap problem asks in particular whether a constant value $S>n$ must satisfy $S\ge2n$.  Peng and Terng~\cite{PengTerng1983} initiated the second-gap analysis.  For the stronger pointwise pinching problem without assuming $S$ constant, Ding and Xin~\cite{DingXin2011} proved the gap $0\le S-n\le n/23$ in dimensions $n\ge6$; Xu and Xu~\cite{XuXu2017} improved this to $n/22$ in all dimensions; and Lei, Xu, and Xu~\cite{LeiXuXu2017} further improved the constant to $n/18$.

There has also been substantial recent activity around the Chern conjecture.  
The three-dimensional result of de Almeida and Brito~\cite{A-B} concerns
closed hypersurfaces in the unit sphere with constant mean curvature
and constant nonnegative scalar curvature. In dimensions $n\ge4$,
Tang, Wei, and Yan~\cite{TangWeiYan2020} proved isoparametricity under
these hypotheses, together with constancy of
$\operatorname{tr}(A^k)$ for $k=3,\ldots,n-1$ and the assumption that
the principal curvatures are everywhere distinct. Tang and
Yan~\cite{TangYan2023} removed the distinctness assumption while
retaining the other hypotheses. 
Cheng, Wei, and Yamashiro~\cite{ChengWeiYamashiro2021} obtained a strong rigidity range under the additional assumption that the third power sum of the principal curvatures is constant.  In dimension four, Li~\cite{LiChern2022} obtained a related isoparametricity result.  Deng, Gu, and Wei~\cite{DengGuWei2017} proved that closed Willmore minimal hypersurfaces in $\mathbb S^5(1)$ with constant scalar curvature are isoparametric.  For closed minimal Willmore hypersurfaces with constant scalar curvature in dimensions $n\ge5$ and with $S>n$, Ge, Tan, Yan, and Zhang~\cite{GeTanYanZhang2025} obtained a second-gap lower bound of the form $2n-3+o(1)$ as $n\to\infty$, and proved the full $2n$ conclusion under an additional inequality involving the fourth power sum $\operatorname{tr}(A^4)$.  More recently, He, Xu, and Zhao~\cite{HeXuZhao2026} proved that a closed minimal hypersurface $M^4\to\Sn^5$ with constant scalar curvature and constant third mean curvature is isoparametric; Ge, Liu, Luo, and Yan~\cite{GeLiuLuoYan2026} and Deng and Kou~\cite{DengKou2026} obtained corresponding rigidity results under constant Gauss--Kronecker curvature.  Tan, Tang, Xie, and Yan~\cite{TanTangXieYan2026} proved that, among closed embedded minimal hypersurfaces with constant $S$ and constant $f_3=\operatorname{tr}(A^3)$, the set of possible values of $S$ is locally finite, without fixing either the topology or the value of $f_3$.  These results reinforce the expected link between constant scalar curvature and isoparametricity.

Parallel to the Chern--Lu program, Simon's conjecture gives a distinguished sequence of two-dimensional curvature-gap problems between consecutive Calabi values.  Ding, Ge, and Li~\cite{DingGeLi2025} established pinching rigidity results for minimal surfaces in spheres.  Ding, Ge, and Li~\cite{DingGeLiThird2026} subsequently developed the third-gap problem and obtained positive pinching results throughout the third Simon interval $5/3\le S\le9/5$, including the endpoint regimes.  More recently, Ding, Ge, and Li~\cite{DingGeLiSimon2026} gave a proof of all gaps in the Simon conjecture in a preprint, together with further curvature-rigidity consequences.  These surface results concern the quantized sequence of curvature intervals determined by the Calabi models.  The present theorem addresses a different but related gap phenomenon: in every dimension $n\ge3$, it gives a uniform codimension-free separation immediately above the universal Li--Li~\cite{LiLi1992} and Chen--Xu~\cite{ChenXu1993} threshold $2n/3$, without requiring $S$ to be constant.

The higher-codimensional situation has a different algebraic first threshold.  Under a flat normal bundle the shape operators commute and the hypersurface scale $n$ reappears; Ge, Li, and Zhang~\cite{GeLiZhangFlat2026} recently proved an explicit constant-$S$ second gap of at least $n/87$ for closed submanifolds of dimension $n\ge3$ in this setting.  Without flatness, Lu~\cite{Lu2011} refined the problem using the second eigenvalue of the fundamental matrix, leading to another Chern-type gap problem.  Recent works by Ding, Ge, Li, and Yang~\cite{DingGeLiYang2026} and by Ge, Li, and Zhang~\cite{GeLiZhangLu2026} prove Lu's conjecture in several surface settings and settle the codimension-two constant case, while Li and Zhao~\cite{LiZhao2026} constructed flat-torus examples showing that the corresponding discreteness statement for Lu's refined quantity fails in codimension at least three.  This contrast illustrates why a gap depending only on $S$ and valid for arbitrary normal bundle is structurally different from both the flat-normal Chern problem and Lu's refined second-gap problem.

Very recently, Firester and Tsiamis~\cite{FiresterTsiamis2026} disproved the
higher-codimensional Chern--do Carmo--Kobayashi discreteness conjecture:
they establish counterexamples for all $n\ge3$ in codimension $q\ge4$, and
also for even $n\ge4$ in codimension $q\ge3$. Their result concerns the
higher-codimensional formulation of the conjecture and does not settle the
classical hypersurface case. This further emphasizes that higher codimension
exhibits phenomena absent from the hypersurface setting.
The theorem below is therefore complementary to, rather than a direct solution of, the classical Chern conjecture.  It does not address the hypersurface second gap from $n$ to $2n$.  Instead, for arbitrary higher codimension $q\ge2$ and without assuming $S$ constant or imposing a pointwise lower pinching hypothesis, it produces a uniform explicit separation immediately above the higher-codimensional Li--Li and Chen--Xu threshold $2n/3$.

The proof pairs the Ricci-commutation tensor with its normal-curvature
part and integrates by parts. The resulting identity involves only
first derivatives and combines with the $S$-weighted Simons identity.
The algebraic estimates are controlled by the Li--Li deficit.
Retaining the trace-free condition on the shape operators improves the
normal-curvature contraction estimate. Two elementary completions of
the square then give one parameter choice valid in every dimension
$n\ge3$.

The first version of this paper
established the explicit codimension-independent estimate
$
 S_{\max}\ge\frac{2n}{3}+2^{-2n^2-10n-30}n^{-2n-2}.
$
The second version
improved the remainder to $(n-2)/[6300(39n+8)]$.
In related recent work, Lei~\cite{Lei2026HigherCodim} and
Xu and Zhao~\cite{XuZhao2026Pinching} proved, respectively,
\[
 S_{\max}>\frac{2n}{3}+\frac{n}{500}-\frac1{180},
 \qquad
 S_{\max}>\frac{2n}{3}+\frac{n-2}{182}.
\]
The estimates for arbitrary symmetric matrix tuples in Proposition~\ref{prop:normal-tangents} already yield a uniform curvature gap. By further exploiting the trace-free property of the shape operators, the refinement below gives the sharper dimension-dependent estimate stated in Theorem~\ref{thm:main}.

\begin{theorem}\label{thm:main}
Let $n\ge3$ and $q\ge2$, and let
$F:M^n\to\Sn^{n+q}(1)$ be a closed  minimal immersion which is
not totally geodesic. Then
\begin{equation}\label{eq:main-gap}
 S_{\max}>
 \frac{n(288\sqrt6\,n-288)}{(288\sqrt6+192)n-137}
 >\frac{2n}{3}+\frac{31-9\sqrt6}{75}(n-2).
\end{equation}
\end{theorem}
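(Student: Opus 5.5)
We argue by contradiction: assume $S\le\Lambda$ on $M$, where $\Lambda$ is the middle quantity in \eqref{eq:main-gap}, and show that $M$ must be totally geodesic. The approach is an integral pinching argument with three inputs: Simons' formula, a first-order integral identity for $|\nabla h|^2$, and the Li--Li deficit.

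First we rewrite Simons' formula in deficit form. Set
\[
D:=\tfrac32S^2-\sum_{\alpha,\beta}\ip{A_\alpha}{A_\beta}^2-\sum_{\alpha,\beta}|[A_\alpha,A_\beta]|^2\ \ge0.
\]
Then
\[
\tfrac12\Delta S=|\nabla h|^2+S\bigl(n-\tfrac32S\bigr)+D .
\]
Next we multiply by $S$ and integrate. This gives
\[
\int |\nabla S|^2\cdot\tfrac12+\int S|\nabla h|^2+\int S^2\bigl(n-\tfrac32 S\bigr)+\int SD=0 .
\]
We also record the unweighted identity
\[
\int|\nabla h|^2+\int S\bigl(n-\tfrac32S\bigr)+\int D=0 .
\]
Since $S\le\Lambda$ with $\Lambda$ only slightly above $2n/3$, the term $n-\tfrac32S$ may be negative, but its size is at most $\tfrac32(\Lambda-\tfrac{2n}{3})S$. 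To close the argument we therefore need a lower bound of the form $\int|\nabla h|^2\ge c\int(\text{positive curvature quantity})$ that compensates this loss.

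The extra information comes from the Ricci commutation formula. For a Codazzi tensor, $h_{ijkl}-h_{ijlk}$ is expressed through curvature terms, one of which is the normal-curvature term $\sum_\beta h^\beta_{ij}R^\perp_{\beta\alpha kl}$. We pair the commutator tensor with this normal-curvature part and integrate by parts, moving one derivative. The resulting identity has the schematic form
\[
\int |R^\perp|^2\text{-type quartic}+\int(\text{cubic}\cdot\text{Ricci terms})=\int\nabla h * \nabla h * h ,
\]
which involves only first derivatives. Cauchy--Schwarz then bounds the right side by $\epsilon\int S|\nabla h|^2+\epsilon^{-1}\int|\nabla h|^2\cdot(\cdots)$; this is exactly why the $S$-weighted Simons identity is needed.

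The algebraic step estimates the quartic terms on the left from below, in terms of $S^2$ and the deficit $D$. For arbitrary symmetric tuples we use Proposition~\ref{prop:normal-tangents}. Near the Li--Li equality configuration, where two shape operators span a Veronese-type block, the normal curvature contraction is large, so either $D$ is large or the commutator term is comparable to $S^2$. To sharpen the constant we impose $\operatorname{tr}A_\alpha=0$ and $n\ge3$; these force the equality configuration to have only a $2$-dimensional support block inside an $n$-dimensional space. This is the source of the gain proportional to $n-2$.

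Finally, we combine the three integral relations (unweighted Simons, $S$-weighted Simons, and the commutation identity) with free parameters. We complete the square twice, once in $\nabla h$ and once in the mixed $S$/$D$ terms, and choose the parameters so that all coefficients are nonnegative whenever $S\le\Lambda$. The optimal choice produces the constants $288\sqrt6$ and $137$, and equality everywhere would force $\nabla h=0$ and $S(\ldots)=0$, hence $S\equiv0$. The second inequality in \eqref{eq:main-gap} is an elementary rational-function comparison for $n\ge3$.

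I expect the main obstacle to be the sharp algebraic lower bound for the normal-curvature contraction in terms of $S^2$ and $D$ using trace-freeness. My first attempt there would be to diagonalize the fundamental matrix $(\ip{A_\alpha}{A_\beta})$ and reduce to the two largest directions, as in the Li--Li proof.
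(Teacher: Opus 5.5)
Your outline follows the paper's architecture: the $S$-weighted Simons identity, pairing the Ricci-commutation tensor with its normal-curvature part $\mathcal N$, one integration by parts, and an algebraic estimate controlled by the Li--Li deficit. However, it leaves out the step that actually produces the theorem. That step is a pointwise lower bound for the \emph{sextic} contraction $\mathscr P=\ip{\mathcal C}{\mathcal N}$, namely (with $D=3S^2-2Q$)
\[
 \mathscr P\ge\Bigl(\sqrt6-\frac43+\frac{137}{144n}\Bigr)S^3-\sqrt6\,SD,
 \qquad\text{strictly wherever } S>0,
\]
which is Proposition~\ref{prop:tracefree-tangent}. You leave exactly this as ``the main obstacle''.

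The constants $288\sqrt6$ and $137$ do not come from optimizing free parameters over several integral identities. The paper takes a single combination: the commutation identity plus $\sqrt6$ times the $2S$-weighted Simons identity. The threshold $(2n\sqrt6-2)/(3\sqrt6-\alpha_n)$ is then read off from the two coefficients of this inequality. The untraced estimates you cite give only weaker thresholds. Without strictness, your endpoint argument also fails: ``$\nabla h=0$ and $S(\ldots)=0$'' still allows $S\equiv\Lambda$, not only $S\equiv0$.

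Note that the quartic part is not estimated algebraically at all: $\ip{\mathcal L}{\mathcal N}=-2J$ exactly, and DDVV $J\le S^2$ handles it. The paper's proof of the sextic bound also does not reduce to two principal directions; it keeps all pairs. Normalize $S=1$, diagonalize the fundamental matrix, and set $x_\alpha=|A_\alpha|^2$, $U_{\alpha\beta}=|[A_\alpha,A_\beta]|^2$, $p=\sum_{\alpha<\beta}(x_\alpha+x_\beta)U_{\alpha\beta}$, $u=\sum_\alpha x_\alpha^2$, $v=\sum_\alpha x_\alpha^3$. The proof then uses:
\begin{itemize}
\item the exact formula $\mathscr P=p-2\sum_{\alpha,\beta,\gamma}\ip{A_\gamma C_{\alpha\beta}A_\gamma}{C_{\alpha\beta}}$;
\item the trace-free eigenvalue bound $\lambda_i\lambda_j\le\frac{n-2}{2n}|A|^2-\frac{n-1}{2n}(\lambda_i-\lambda_j)^2$;
\item $p\ge a=(1-D)/2$, from Lu's weighted inequality;
\item the Cauchy--Schwarz bound $\sum_{\alpha<\beta}(x_\alpha+x_\beta)U_{\alpha\beta}^2/(x_\alpha x_\beta)\ge p^2/(u-v)$, together with $u-v\le u(1-u)$;
\item two completions of the square, first in $a$ and then in $u$.
\end{itemize}

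Your first-derivative side has the wrong homogeneity. Since $\mathcal N$ is cubic, $\nabla\mathcal N$ is of type $\nabla h*h*h$. With $(B_{\alpha k})_{ij}=h^\alpha_{ij;k}$, $(M_k)_{\alpha\beta}=\ip{A_\alpha}{B_{\beta k}}$ and $W=\sum_{k,\alpha,\beta}\ip{[A_\alpha,A_\beta]}{[B_{\alpha k},B_{\beta k}]}$, the identity reads
\[
 \int_M(\mathscr P-2J)\dd=\int_M\Bigl(W+\sum_k|M_k-M_k^{\mathsf T}|^2\Bigr)\dd .
\]
Its right side is of type $\nabla h*\nabla h*h*h$ and is bounded \emph{pointwise} by $3S|\nabla h|^2$. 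DDVV applied to the tuple $(B_{\alpha k})_\alpha$ gives $W\le S|\nabla h|^2$, and $|U^{\mathsf T}V-V^{\mathsf T}U|^2\le2|U|^2|V|^2$ gives $\sum_k|M_k-M_k^{\mathsf T}|^2\le2S|\nabla h|^2$. No $\epsilon$-splitting and no unweighted Simons identity are needed. The only compatibility condition is that the term $-2\sqrt6\int S|\nabla h|^2$ coming from the weighted Simons identity absorbs this $3$, i.e.\ $2\sqrt6-3>0$.

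Finally, trace-freeness is not the source of the linear gain. Any bound $\mathscr P\ge\alpha S^3-\beta SD$ with $0<\alpha<3\beta$ and $2\beta\ge3$ yields the threshold $(2n\beta-2)/(3\beta-\alpha)$, which exceeds $2n/3$ by $(2n\alpha-6)/(3(3\beta-\alpha))$. The untraced parameters $(13/9,10/3,0)$ of Proposition~\ref{prop:normal-tangents} already achieve this; trace-freeness only sharpens the constants.
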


\begin{remark}
The restriction $n\ge3$ is sharp for a statement that does not exclude an
equality model: when $n=2$, the Veronese surface has $S\equiv4/3$.
Moreover, Wang and Wang~\cite[Section~5.3.3]{WangWang2023} constructed a
one-parameter family of immersed minimal two-spheres in $\mathbb S^4$
deforming the double-covered Veronese surface, with non-Veronese members
satisfying $\max_{\mathbb S^2}|h|^2>4/3$ and
$\max_{\mathbb S^2}|h|^2\to4/3$ along the family. More explicitly,
their curvature formula gives, in the parameter $t$ of that family,
\[
 K_t(0)=1-\frac23e^{-4t},\qquad
 K_t(\infty)=1-\frac23e^{4t}.
\]
The Gauss equation $|h_t|^2=2(1-K_t)$ yields
$\max_{\mathbb S^2}|h_t|^2\ge(4/3)e^{4|t|}>4/3$ for $t\ne0$.
Smooth convergence on the compact domain as $t\to0$ gives uniform
convergence of $|h_t|^2$ to $4/3$. Thus no positive absolute gap
remains in dimension two even after the Veronese equality model is
excluded.
\end{remark}

\begin{remark}
Xu--Zhao~\cite{XuZhao2026Pinching} conjectured that every closed non-totally geodesic minimal submanifold
$
 M^n\to\mathbb S^{n+q}(1)$ with $n\ge3,\  q\ge2,
$
satisfies the sharp estimate
\[
 S_{\max}\ge\frac{2n}{3}+\frac{n}{3}=n.
\]
The value $n$ is sharp: a Clifford minimal hypersurface in
$\mathbb S^{n+1}(1)$, regarded as a minimal submanifold of
$\mathbb S^{n+q}(1)$ through a totally geodesic inclusion, satisfies
$S\equiv n$. Theorem~\ref{thm:main} gives a uniform quantitative step
toward this conjectural endpoint.
\end{remark}

Write
\[
 Q=\sum_{\alpha,\beta}\langle A_\alpha,A_\beta\rangle^2
    +\sum_{\alpha,\beta}|[A_\alpha,A_\beta]|^2,
 \qquad D=3S^2-2Q\ge0.
\]
We decompose the Ricci-commutation tensor as
$\Phi=\mathcal L+\mathcal C$, where $\mathcal L$ is linear and
$\mathcal C$ is cubic in the second fundamental form.
Section~\ref{sec:global} introduces the normal part $\mathcal N$ of
$\mathcal C$ and states the algebraic estimates for
$\langle\mathcal C,\mathcal N\rangle$, including the trace-free
refinement used in the main theorem.
Section~\ref{sec:main-proof} combines the corresponding
integration-by-parts identity with the $S$-weighted Simons identity
to prove both inequalities in \eqref{eq:main-gap}, with strict
inequalities in every dimension. Appendix~\ref{sec:normal-certificates}
contains all the algebraic proofs. The finite-support moment argument
and the dimension-indexed scalar estimates are retained for
Proposition~\ref{prop:normal-tangents}; the proof of the new trace-free
estimate uses only $v\ge u^2$ and two completions of the square.

\section{Geometric and analytic preliminaries}

\subsection{Notation and conventions}
Let
$
 F:M^n\longrightarrow \Sn^{n+q}(1)\subset\mathbb R^{n+q+1}
$
be a smooth immersion and let $g$ be the induced metric.  Around a fixed point of
$M$, choose a local orthonormal tangent frame $\{e_1,\ldots,e_n\}$ with dual
coframe $\{\omega_1,\ldots,\omega_n\}$ and a local orthonormal normal frame
$\{\nu_1,\ldots,\nu_q\}$.  Tangent indices $i,j,k,l,r,s$ range from $1$ to $n$,
and normal indices $\alpha,\beta,\gamma$ range from $1$ to $q$.

We write $\bar\nabla$, $\nabla$ and $\nabla^\perp$ for the Levi--Civita
connection of the unit sphere, the induced Levi--Civita connection of $M$, and
the normal connection, respectively.  Our sign convention for the second
fundamental form and the shape operators is
\begin{align*}
 \bar\nabla_XY&=\nabla_XY+h(X,Y),\\
 \bar\nabla_X\nu_\alpha&=-A_\alpha X+\nabla_X^\perp\nu_\alpha,
 \qquad
 \langle A_\alpha X,Y\rangle=\langle h(X,Y),\nu_\alpha\rangle.
\end{align*}
Thus
\begin{equation*}
 h(e_i,e_j)=\sum_{\alpha=1}^q h_{ij}^\alpha\nu_\alpha,
 \qquad
 (A_\alpha)_{ij}=h_{ij}^\alpha,
\end{equation*}
and every $A_\alpha$ is a real symmetric matrix.  Throughout, ``closed'' means compact without boundary.  We use the non-averaged
mean-curvature vector
\[
 H:=\sum_{i=1}^n h(e_i,e_i).
\]
Hence the immersion is minimal exactly when
\begin{equation*}
 H=0
 \quad\Longleftrightarrow\quad
 \sum_{i=1}^n h_{ii}^\alpha=\operatorname{tr}A_\alpha=0
 \quad(1\le\alpha\le q).
\end{equation*}
In particular, for a minimal immersion all shape operators lie in
$\operatorname{Sym}_0^2(\mathbb R^n)$, the space of trace-free real symmetric
$n\times n$ matrices.

All unadorned pointwise tensor norms are Hilbert--Schmidt norms induced by $g$
and the normal metric. Every displayed index sum runs over its full range;
in particular, $(k,l)$ is an ordered tangent pair.  For matrices we use
\[
 \langle A,B\rangle_F:=\operatorname{tr}(A^{\mathsf T}B)=\sum_{i,j}a_{ij}b_{ij},
 \qquad |A|^2=\langle A,A\rangle_F,
 \qquad [A,B]:=AB-BA.
\]
For matrix contractions we suppress the subscript $F$ when no ambiguity can arise.
Following Lu~\cite{Lu2011}, define Lu's fundamental matrix by
\[
 \mathcal A:=\bigl(\langle A_\alpha,A_\beta\rangle_F\bigr)_{\alpha,\beta=1}^q.
\]
We shall refer to $\mathcal A$ simply as the fundamental matrix below. It is the Gram
matrix of the shape operators with respect to the Frobenius inner product.
The squared length of the second fundamental form is therefore
\begin{equation*}
 S:=|h|^2
 =\sum_{\alpha=1}^q|A_\alpha|^2
 =\sum_{\alpha=1}^q\sum_{i,j=1}^n(h_{ij}^\alpha)^2.
\end{equation*}
Contracting the Gauss equation and using minimality gives
\begin{equation*}
 \operatorname{Scal}_M=n(n-1)-S.
\end{equation*}
In particular, on a minimal submanifold of the unit sphere, $S$ is constant if and only if the scalar curvature is constant.
Since $M$ is closed in the main theorem, we write
\[
 S_{\max}:=\max_M S.
\]

Covariant derivatives of $h$ are denoted by
\[
 h_{ij;k}^\alpha:=(\nabla h)_{ij;k}^\alpha,
 \qquad
 h_{ij;kl}^\alpha:=(\nabla^2h)_{ij;kl}^\alpha,
\]
and we also write $h_{ijk}^\alpha=h_{ij;k}^\alpha$. Thus
\[
 |\nabla h|^2=\sum_{\alpha,i,j,k}(h_{ij;k}^\alpha)^2.
\]
Our order convention is that, at a point where the tangent and normal frames
are synchronous,
\[
 h_{ij;k}^\alpha=(\nabla_{e_k}h)_{ij}^\alpha,
 \qquad
 h_{ij;kl}^\alpha=(\nabla_{e_l}\nabla_{e_k}h)_{ij}^\alpha;
\]
thus the rightmost derivative index is applied last.  This convention is used
in the Ricci commutation formula below.
At a point where both tangent and normal frames are synchronous,
$(\nabla_{e_k}A_\alpha)_{ij}=h_{ij;k}^\alpha$. Because the ambient sphere is a
space form, Codazzi gives
\begin{equation*}
 h_{ij;k}^\alpha=h_{ik;j}^\alpha;
\end{equation*}
combined with $h_{ij}^\alpha=h_{ji}^\alpha$, this implies that
$h_{ijk}^\alpha$ is symmetric in the three tangent indices.

We use the curvature-operator convention
\[
 R(X,Y)Z=\nabla_X\nabla_YZ-\nabla_Y\nabla_XZ
          -\nabla_{[X,Y]}Z,
\]
and the same convention for the normal connection. Our curvature components are
\[
 R_{ijkl}=\langle R(e_k,e_l)e_j,e_i\rangle,
 \qquad
 R^\perp_{\alpha\beta kl}
 =\langle R^\perp(e_k,e_l)\nu_\beta,\nu_\alpha\rangle.
\]
With these definitions, our curvature-sign convention is fixed by the
following Gauss and Ricci equations:
\begin{align}
 R_{ijkl}
 &=\delta_{ik}\delta_{jl}-\delta_{il}\delta_{jk}
 +\sum_{\gamma=1}^q
 \left(h_{ik}^\gamma h_{jl}^\gamma-h_{il}^\gamma h_{jk}^\gamma\right),
 \label{eq:Gauss-prelim}\\
 R^\perp_{\alpha\beta kl}
 &=\sum_{r=1}^n
 \left(h_{kr}^\alpha h_{lr}^\beta-h_{kr}^\beta h_{lr}^\alpha\right).
 \label{eq:Ricci-prelim}
\end{align}
With this convention, the Ricci commutation formula for the second fundamental
form is
\begin{equation}\label{eq:ricci-comm-prelim}
 h_{ij;kl}^\alpha-h_{ij;lk}^\alpha
 =\sum_{r=1}^n h_{rj}^\alpha R_{rikl}
 +\sum_{r=1}^n h_{ir}^\alpha R_{rjkl}
 +\sum_{\beta=1}^q h_{ij}^\beta R^\perp_{\beta\alpha kl}.
\end{equation}
These formulas fix all curvature signs used later.  The Laplace--Beltrami
operator is
\[
 \Delta f:=\sum_{i=1}^n f_{;ii},
\]
and the same trace convention is used for the rough Laplacian of tensors.  We write $d\mu$ for the volume measure of $(M,g)$.

\subsection{Simons' identity and the Li--Li deficit}
We use the nonnegative Li--Li deficit
\begin{equation}\label{eq:QD}
 \begin{split}
 Q&:=\sum_{\alpha,\beta}\langle A_\alpha,A_\beta\rangle^2
       +\sum_{\alpha,\beta}|[A_\alpha,A_\beta]|^2,\\
 D&:=3S^2-2Q\ge0.
 \end{split}
\end{equation}
These definitions also apply to arbitrary symmetric matrix tuples; the
nonnegativity is the Li--Li inequality~\cite{LiLi1992} and is also
proved directly in Appendix~\ref{sec:normal-certificates}.

\begin{lemma}[Simons' identity~\cite{Simons1968,Lu2011}]\label{lem:simons-lili}
For a minimal immersion into the unit sphere,
\begin{equation}\label{eq:simons}
 \frac12\Delta S
 =|\nabla h|^2+nS-Q
 =|\nabla h|^2+nS-\frac32S^2+\frac12D.
\end{equation}
\end{lemma}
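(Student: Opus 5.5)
The statement to prove is Lemma~\ref{lem:simons-lili}. I would split it into the classical Simons computation, which gives $\frac12\Delta S=|\nabla h|^2+nS-Q$, followed by a purely algebraic rewriting via the definition of $D$.

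\emph{Step 1.} Write $\frac12\Delta S=|\nabla h|^2+\sum_{\alpha,i,j}h^\alpha_{ij}\,h^\alpha_{ij;kk}$, summing over $k$. This is immediate from $S=\sum(h^\alpha_{ij})^2$ and the trace convention for $\Delta$.

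\emph{Step 2.} Compute the rough Laplacian of $h$. Codazzi makes $h^\alpha_{ijk}$ totally symmetric, so
\[
h^\alpha_{ij;kk}=h^\alpha_{ik;jk}=h^\alpha_{ki;jk}.
\]
Next apply the Ricci commutation formula \eqref{eq:ricci-comm-prelim} to swap the last two derivative indices. This gives
\[
h^\alpha_{ki;jk}=h^\alpha_{ki;kj}+(\text{curvature terms}),
\]
and $h^\alpha_{ki;kj}=h^\alpha_{kk;ij}=0$ by Codazzi again and minimality ($\operatorname{tr}A_\alpha=0$). Hence $\Delta h^\alpha_{ij}$ is purely a sum of terms of the form $h\cdot R$ and $h\cdot R^\perp$.

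\emph{Step 3.} Substitute the Gauss equation \eqref{eq:Gauss-prelim} and the Ricci equation \eqref{eq:Ricci-prelim}, using minimality throughout, and contract against $h^\alpha_{ij}$. The constant-curvature part of $R$ contributes $nS$. The quadratic part of the Gauss equation contributes
\[
-\sum_{\alpha,\beta}\langle A_\alpha,A_\beta\rangle^2+\sum_{\alpha,\beta}\operatorname{tr}\bigl(A_\alpha A_\beta A_\alpha A_\beta - A_\alpha^2A_\beta^2\bigr)
\]
in the standard form. The normal-curvature part contributes a further commutator term. Using $|[A,B]|^2=2\operatorname{tr}(A^2B^2)-2\operatorname{tr}(ABAB)$, these two commutator contributions together give $-\sum_{\alpha,\beta}|[A_\alpha,A_\beta]|^2$. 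Altogether
\[
\sum h^\alpha_{ij}\Delta h^\alpha_{ij}=nS-\sum_{\alpha,\beta}\langle A_\alpha,A_\beta\rangle^2-\sum_{\alpha,\beta}|[A_\alpha,A_\beta]|^2=nS-Q,
\]
which is the first equality in \eqref{eq:simons}.

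\emph{Step 4.} The second equality is algebra: $D=3S^2-2Q$ gives $-Q=-\frac32S^2+\frac12D$.

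The main obstacle is bookkeeping of signs and index order in Steps 2--3, under the paper's conventions: the rightmost derivative index is applied last, and $R_{ijkl}=\langle R(e_k,e_l)e_j,e_i\rangle$. I would fix these by checking the final formula on a model. Comparing with Simons~\cite{Simons1968} and Lu~\cite{Lu2011}, the totally geodesic case and a Clifford hypersurface (where $Q=S^2$, $S=n$, $\nabla h=0$) must give $\Delta S=0$, which pins down the signs of the $nS$ term and the $Q$ term.
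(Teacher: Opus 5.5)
Your proposal is correct and is essentially the paper's proof. Both arguments proceed the same way:
\begin{itemize}
\item Codazzi and minimality turn $\Delta h^\alpha_{ij}$ into the Ricci-commutation curvature terms, since the commuted term $\sum_k h^\alpha_{ik;kj}$ vanishes.
\item The constant-curvature parts give $nA_\alpha$.
\item The Gauss cubic part gives $-\sum_{\alpha,\beta}\langle A_\alpha,A_\beta\rangle^2$ plus $\sum_{\alpha,\beta}\operatorname{tr}(A_\alpha A_\beta A_\alpha A_\beta-A_\alpha^2A_\beta^2)$, and the normal-curvature part gives that trace expression a second time. Together they yield $-Q$. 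The paper packages this as $-\sum_\beta[A_\beta,[A_\beta,A_\alpha]]$ before contracting with $A_\alpha$, which is the same computation.
\end{itemize}

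One small caveat: your Clifford sanity check only confirms $nS=Q$ there, so it cannot by itself fix the overall sign of $nS-Q$. The signs are settled by your explicit computation, which you have right.
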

\begin{proof}
We include the contraction calculation to check the signs against our
curvature conventions. Codazzi and differentiated minimality give
\[
 \sum_k h^\alpha_{ik;k}=\sum_k h^\alpha_{kk;i}=0.
\]
Commuting the last two derivatives in
$\sum_k h^\alpha_{ij;kk}=\sum_k h^\alpha_{ik;jk}$ therefore gives
\[
 \Delta h^\alpha_{ij}
 =\sum_{k,r}h^\alpha_{rk}R_{rijk}
  +\sum_{k,r}h^\alpha_{ir}R_{rkjk}
  +\sum_{k,\beta}h^\beta_{ik}R^\perp_{\beta\alpha jk}.
\]
The omitted term is $\sum_kh^\alpha_{ik;kj}=0$, the covariant
derivative of the preceding contracted identity. The constant-curvature
parts of the first two sums are $A_\alpha$ and $(n-1)A_\alpha$,
respectively. For each $\beta$, their cubic parts and the normal-curvature
part are, in the same order,
\[
 \begin{gathered}
 A_\beta A_\alpha A_\beta-\langle A_\alpha,A_\beta\rangle A_\beta,
 \qquad -A_\alpha A_\beta^2,\\
 A_\beta A_\alpha A_\beta-A_\beta^2A_\alpha.
 \end{gathered}
\]
Here $\operatorname{tr}A_\beta=0$ removes the trace term in the
second sum. Consequently,
\[
 \Delta A_\alpha
 =nA_\alpha-\sum_\beta\langle A_\alpha,A_\beta\rangle A_\beta
  -\sum_\beta[A_\beta,[A_\beta,A_\alpha]],
\]
where $\Delta A_\alpha$ denotes the $\alpha$-component of the rough
Laplacian of $h$, using both the tangent and normal connections.
For symmetric matrices, cyclicity of the trace gives
\[
 \langle A_\alpha,[A_\beta,[A_\beta,A_\alpha]]\rangle
 =|[A_\beta,A_\alpha]|^2.
\]
Taking the Frobenius product with $A_\alpha$ and summing over $\alpha$
yields $\langle h,\Delta h\rangle=nS-Q$. Now use
$\frac12\Delta|h|^2=|\nabla h|^2+\langle h,\Delta h\rangle$ and
\eqref{eq:QD}. This agrees with \cite[equation~(31)]{Lu2011}.
\end{proof}

For symmetric matrices $P,B$, we shall also use
\begin{equation}\label{eq:BW}
 |[P,B]|^2\le2|P|^2|B|^2.
\end{equation}
Indeed, diagonalizing $P$ gives
$[P,B]_{ij}=(\lambda_i-\lambda_j)b_{ij}$, and
$(\lambda_i-\lambda_j)^2\le2|P|^2$.

\section{A global Li--Li deficit estimate}\label{sec:global}

\subsection{The Ricci-commutation tensor}
For each ordered tangent pair $(k,l)$ define skew-symmetric matrices
\[
 (R_0^{kl})_{ij}=\delta_{ik}\delta_{jl}-\delta_{il}\delta_{jk},
 \qquad
 (G^{kl})_{ij}=\sum_\gamma
   \bigl((A_\gamma)_{ik}(A_\gamma)_{jl}
          -(A_\gamma)_{il}(A_\gamma)_{jk}\bigr).
\]
Set
\begin{equation}\label{eq:LC}
 \begin{aligned}
 \mathcal L_\alpha^{kl}&=[A_\alpha,R_0^{kl}],\\
 \mathcal C_\alpha^{kl}&=[A_\alpha,G^{kl}]
       +\sum_\beta A_\beta[A_\beta,A_\alpha]_{kl},\qquad
 \Phi=\mathcal L+\mathcal C.
 \end{aligned}
\end{equation}
Thus $\mathcal L$ is linear and $\mathcal C$ is cubic in the tuple.
The linear part has the explicit components
\[
 \mathcal L^\alpha_{ij kl}
 =h^\alpha_{kj}\delta_{il}-h^\alpha_{lj}\delta_{ik}
   +h^\alpha_{ik}\delta_{jl}-h^\alpha_{il}\delta_{jk}.
\]
For an actual immersion, \eqref{eq:Gauss-prelim}, \eqref{eq:Ricci-prelim},
and \eqref{eq:ricci-comm-prelim} give
\begin{equation}\label{eq:commutator}
 \Phi^\alpha_{ij kl}=h^\alpha_{ij;kl}-h^\alpha_{ij;lk}.
\end{equation}

\subsection{The normal-curvature contraction}\label{sec:normal-projection}
To estimate the Li--Li deficit globally, we test the cubic part in
\eqref{eq:LC} against its normal-curvature part
\[
 \mathcal N_\alpha^{kl}:=\sum_\beta A_\beta[A_\beta,A_\alpha]_{kl},
 \qquad J:=\sum_{\alpha,\beta}|[A_\alpha,A_\beta]|^2,
 \qquad \mathscr P:=\ip{\mathcal C}{\mathcal N}.
\]
These are invariant tensor contractions. More precisely, if
$\widehat A_\alpha=\sum_\gamma Q_{\alpha\gamma}P A_\gamma P^{\mathsf T}$
for $(P,Q)\in O(n)\times O(q)$, then $\mathcal C$ and $\mathcal N$
transform by the induced orthogonal action on their normal index and
four tangent indices. Their pairing $\mathscr P$ is thus frame independent.
The following pointwise algebraic estimates will be proved in
Appendix~\ref{sec:normal-certificates}.

\begin{proposition}\label{prop:normal-tangents}
For each of the following four parameter choices, every symmetric
matrix tuple satisfies
\begin{equation}\label{eq:normal-tangent}
 \mathscr P+\eta S(S^2-J)\ge\alpha S^3-\beta SD.
\end{equation}
The algebraic inequalities themselves impose no
restriction on the matrix size or the number of matrices, and require
no trace condition.
\[
\begin{array}{c|ccc}
\text{dimension used}&\alpha&\beta&\eta\\ \hline
n=3&3/2&529/150&71/75\\
n=4&299/200&7/2&3/5\\
n=5&37/25&69/20&1/2\\
n\ge6&13/9&10/3&0
\end{array}
\]
For the parameter choice corresponding to $n=3$, the residual
\[
 \mathscr E:=\mathscr P+\eta S(S^2-J)-\alpha S^3+\beta SD
\]
is strictly positive whenever $S>0$ and $D>0$. For $S=0$ it vanishes.
\end{proposition}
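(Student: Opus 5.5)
We need to prove a pointwise algebraic inequality for a tuple of symmetric matrices, where $\mathscr P=\langle\mathcal C,\mathcal N\rangle$. The plan is to first compute $\mathscr P$ explicitly in invariant terms. Write $\mathcal C=\mathcal G+\mathcal N$ with $\mathcal G_\alpha^{kl}=[A_\alpha,G^{kl}]$. Then $\mathscr P=|\mathcal N|^2+\langle\mathcal G,\mathcal N\rangle$.

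The first term is
\[
 |\mathcal N|^2=\sum_{\alpha,k,l}\Bigl|\sum_\beta A_\beta[A_\beta,A_\alpha]_{kl}\Bigr|^2
 =\sum_{\alpha,\beta,\gamma}\langle A_\beta,A_\gamma\rangle\,\langle[A_\beta,A_\alpha],[A_\gamma,A_\alpha]\rangle ,
\]
which is a quartic form in the fundamental matrix $\mathcal A$ and the commutators; it is sextic in the tuple, consistent with $S^3$ and $SD$. The cross term $\langle\mathcal G,\mathcal N\rangle$ is expanded similarly via $G^{kl}$. By $O(q)$-invariance I would diagonalize $\mathcal A=\mathrm{diag}(\lambda_1,\dots,\lambda_q)$, so that $S=\sum\lambda_\alpha$. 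In that frame, $J=\sum_{\alpha\ne\beta}|[A_\alpha,A_\beta]|^2$ and $D=3S^2-2\sum\lambda_\alpha^2-2J$.

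The key steps are then as follows. First, bound the cross term from below using \eqref{eq:BW} and Cauchy--Schwarz in the form $|\langle\mathcal G,\mathcal N\rangle|\le|\mathcal G||\mathcal N|$, absorbing it into $|\mathcal N|^2$ by completing the square. This reduces \eqref{eq:normal-tangent} to an inequality in the scalar quantities $\lambda_\alpha$ and $c_{\alpha\beta}=|[A_\alpha,A_\beta]|^2$, subject to the constraints $c_{\alpha\beta}\le2\lambda_\alpha\lambda_\beta$ and the Li--Li structure. Second, normalize $S=1$ and view the resulting expression as an integral against the finitely supported probability measure with weights $\lambda_\alpha$. This is the ``finite-support moment argument'' mentioned in the introduction: the left side minus the right side becomes a function of a few moments (for example $u=\sum\lambda_\alpha^2$, $v$, and $J$) with constraints such as $v\ge u^2$ and $J\le 2(1-u)u$-type bounds. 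Third, for each row of the table, verify the resulting polynomial inequality in these two or three moment variables on the feasible region. I would do this by checking boundary cases and critical points, or by exhibiting an explicit sum-of-squares or completion-of-square certificate. The listed rational constants ($529/150$, $71/75$, and so on) strongly suggest tuned certificates of exactly this kind.

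For strictness in the $n=3$ row, I would show that every certificate square vanishes only when $D=0$, using the Li--Li equality characterization: at most two nonzero matrices, of the Veronese/Chern form. This gives $\mathscr E>0$ when $S>0$ and $D>0$. The claim $\mathscr E=0$ at $S=0$ is trivial, since $S=0$ forces every $A_\alpha=0$.

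The main obstacle is the cross term $\langle\mathcal G,\mathcal N\rangle$. Crude Cauchy--Schwarz likely loses too much to reach $\alpha$ close to $3/2$. I would instead try to expand it exactly in the eigenbasis of a single $A_\alpha$, with the Gram diagonalization handled separately, and then use the identity $\langle A_\alpha,[A_\beta,[A_\beta,A_\alpha]]\rangle=|[A_\alpha,A_\beta]|^2$ to express it through $c_{\alpha\beta}$ and triple-product terms. After that, I would bound the triple terms by moment inequalities before running the certificate search.
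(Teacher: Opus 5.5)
\textbf{There is a genuine gap: the estimate that carries the proof is missing, and the route you list first for it fails outright.} Your architecture is the paper's: split $\mathscr P=|\mathcal N|^2+\langle\mathcal G,\mathcal N\rangle$, diagonalize the fundamental matrix, normalize $S=1$, reduce to $u=\sum x_\alpha^2$ and $v=\sum x_\alpha^3$ (with $x_\alpha=|A_\alpha|^2$, your $\lambda_\alpha$), then check cases. The failure of the first route is certain, not merely ``likely'', because the cross term is the main positive contribution rather than an error term. Take the Li--Li equality pair $A_1=\frac12\bigl(\begin{smallmatrix}0&1\\1&0\end{smallmatrix}\bigr)$, $A_2=\frac12\bigl(\begin{smallmatrix}1&0\\0&-1\end{smallmatrix}\bigr)$, which has $S=1$, $J=1$ and $D=0$. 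Here $|\mathcal N|^2=1/2$ but $\langle\mathcal G,\mathcal N\rangle=1$, so $\mathscr P=3/2$. This is exactly the equality case of the $n=3$ row, and the $n\ge6$ row needs $\mathscr P\ge13/9$ at this point. Any bound built from $|\langle\mathcal G,\mathcal N\rangle|\le|\mathcal G||\mathcal N|$ gives at most $|\mathcal N|^2-|\mathcal G||\mathcal N|\le-1/2$ here.

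What you need is a sign-preserving bound. Write $\langle\mathcal G,\mathcal N\rangle=-2\sum_{\alpha,\beta,\gamma}\langle A_\gamma C_{\alpha\beta}A_\gamma,C_{\alpha\beta}\rangle$ with $C_{\alpha\beta}=[A_\alpha,A_\beta]$. Diagonalize $A_\gamma$ and use $\lambda_i\lambda_j\le\frac12\bigl(|A_\gamma|^2-(\lambda_i-\lambda_j)^2\bigr)$ for $i\ne j$. For $\gamma\notin\{\alpha,\beta\}$ this gives the bound $x_\gamma U_{\alpha\beta}/2$, where $U_{\alpha\beta}=|C_{\alpha\beta}|^2$ (your $c_{\alpha\beta}$). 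For $\gamma=\alpha$ we have $(C_{\alpha\beta})_{ij}=(\lambda_i-\lambda_j)b_{ij}$ with $A_\beta=(b_{ij})$. Then the quartic remainder plus Cauchy--Schwarz gives the extra gain $-U_{\alpha\beta}^2/(2x_\beta)$. A weighted Cauchy--Schwarz using $\sum_{\alpha<\beta}(x_\alpha+x_\beta)x_\alpha x_\beta=u-v$ yields $\mathscr P\ge p-J+2p^2/(u-v)$, where $p=|\mathcal N|^2$. The identity $\langle A_\alpha,[A_\beta,[A_\beta,A_\alpha]]\rangle=|[A_\alpha,A_\beta]|^2$ in your fallback does not produce this quartic gain.

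\textbf{Two further ingredients are also missing.}

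First, you need a lower bound on $p$. The constraint $U_{\alpha\beta}\le2x_\alpha x_\beta$ only bounds $p$ from above. The proof needs $p\ge u+J-1$, i.e.\ $p\ge a:=(1-D)/2$, since $J=1+a-u$. The paper gets this from Lu's weighted commutator inequality $\sum_j|[P,B_j]|^2\le\sum_j|B_j|^2+\max_j|B_j|^2$, the same inequality that gives $J\le S^2$. Only through this does $D$ enter, giving the scalar reduction $\mathscr P\ge u-1+2a^2/(u-v)$.

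Second, $v\ge u^2$ is too weak. With $u-v\le u(1-u)$, the $n\ge6$ target $u+\frac89-\frac{20}3a+\frac{2a^2}{u-v}$ is about $-0.044$ at $u=a=0.4$. The $n=3$ residual is about $-0.061$ at the same point. You need the extremal bound $u-v\le R(u)=\bigl(8+(6u-2)^{3/2}\bigr)/36$ on $[1/3,1/2]$. It comes from a Lagrange-multiplier analysis showing the minimizer of $v$ at fixed $u$ has the form $(x,x,y)$. Concavity of $1/R$ then reduces the check to endpoints.

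Once this scalar reduction is in place, strictness for $n=3$ follows directly from the case analysis. The scalar residual vanishes only at $a=1/2$, i.e.\ $D=0$, so no Li--Li equality classification is needed.
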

\begin{proof}
See Lemmas~\ref{lem:normalized-scalar-reduction} and
\ref{lem:moment-bounds}, together with
Subsections~\ref{subsec:small-dimensions} and~\ref{subsec:tail} of
Appendix~\ref{sec:normal-certificates}.
\end{proof}

For a minimal immersion, the shape operators are trace-free. This
additional information gives the estimate used below.

\begin{proposition}\label{prop:tracefree-tangent}
Let $n\ge3$ and let $(A_\alpha)$ be a finite tuple of real symmetric
trace-free $n\times n$ matrices. Define
\[
 \alpha_n:=\sqrt6-\frac43+\frac{137}{144n}.
\]
Then
\begin{equation}\label{eq:tracefree-tangent}
 \mathscr P\ge\alpha_nS^3-\sqrt6\,SD.
\end{equation}
The inequality is strict whenever $S>0$, and both sides vanish when
$S=0$. The number of matrices is unrestricted.
\end{proposition}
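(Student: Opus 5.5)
Expanding $\mathscr P=\langle\mathcal C,\mathcal N\rangle$ in terms of invariants of the tuple, we reduce \eqref{eq:tracefree-tangent} to a scalar inequality among a few quantities. Write $\mathcal C=\mathcal G+\mathcal N$ with $\mathcal G_\alpha^{kl}=[A_\alpha,G^{kl}]$. Then
\[
\mathscr P=|\mathcal N|^2+\langle\mathcal G,\mathcal N\rangle .
\]

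For $|\mathcal N|^2$, set $B_{\alpha\beta}:=[A_\alpha,A_\beta]$. Summing the entries of $\sum_\beta A_\beta (B_{\beta\alpha})_{kl}$ gives
\[
|\mathcal N|^2=\sum_{\alpha,\beta,\gamma}\langle A_\beta,A_\gamma\rangle\langle B_{\beta\alpha},B_{\gamma\alpha}\rangle ,
\]
which is a contraction of the fundamental matrix $\mathcal A$ against the Gram matrix of commutators. For the cross term, the contraction $\sum_{kl}(G^{kl})_{ij}(B)_{kl}$ is again a cubic expression, and $\langle\mathcal G,\mathcal N\rangle$ becomes a quartic-in-traces expression of the form $\sum\operatorname{tr}(A_\alpha A_\gamma A_\beta B_{\beta\alpha}A_\gamma)$ and similar terms.

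Next we diagonalize $\mathcal A$ by an $O(q)$ rotation, which is allowed by the invariance noted before Proposition~\ref{prop:normal-tangents}. Writing $\lambda_\alpha=|A_\alpha|^2$, we have $S=\sum\lambda_\alpha$ and
\[
D=3S^2-2\sum\lambda_\alpha^2-2J .
\]
The aim is the pointwise bound
\[
\mathscr P\ge c_1 S J - c_2 J^2/S-\dots ,
\]
followed by an expression of everything through $u:=J/S^2$ and $v:=\sum\lambda_\alpha^2/S^2$. Then $D/S^2=3-2v-2u$, and Cauchy--Schwarz-type constraints such as $v\ge u^2$ (as the paper hints) become available.

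Trace-freeness enters here. In the estimate \eqref{eq:BW}, $(\lambda_i-\lambda_j)^2\le 2|P|^2$ can be sharpened when $\operatorname{tr}P=0$, and more importantly the cross term $\langle\mathcal G,\mathcal N\rangle$ has contributions involving $\operatorname{tr}A_\gamma$ that vanish. This removes terms of the kind responsible for the weaker constants in Proposition~\ref{prop:normal-tangents}, and should produce the $137/(144n)$ correction through the sharp bound $\operatorname{tr}(P^4)\le\frac{n^2-3n+3}{n(n-1)}|P|^4$, or a related $1/n$ improvement for trace-free matrices.

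With these in hand, \eqref{eq:tracefree-tangent} becomes a polynomial inequality $f(u,v)\ge0$ on the admissible region $0\le u$, $u^2\le v\le1$, $2u+2v\le3$. We prove it by two completions of the square. The coefficient $\sqrt6$ of $SD$ is chosen precisely so that one square $(\sqrt6\,u-\dots)^2$ closes, and the second square absorbs the $v$-dependence using $v\ge u^2$. Strictness for $S>0$ should follow because equality in both squares together with $v\ge u^2$ forces a configuration incompatible with trace-freeness when $n\ge3$. Both sides vanish trivially at $S=0$.

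The main obstacle is the first step: obtaining a clean lower bound for the cross term $\langle\mathcal G,\mathcal N\rangle$ in terms of $S$, $J$ and $\sum\lambda_\alpha^2$ alone, with the trace-free gain captured explicitly. First I would try reducing it to pairs $(\alpha,\beta)$ in a frame diagonalizing one $A_\alpha$, then bounding the off-diagonal products by $|B_{\alpha\beta}|^2$ times eigenvalue gaps.
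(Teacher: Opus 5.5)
Your outline has a genuine gap at exactly the step you call the main obstacle, and the scalar endgame is set up in the wrong variables. Your closing idea (diagonalize one matrix per pair and use eigenvalue gaps) points the right way, but the proposal never produces a bound. The mechanism you suggest for trace-freeness also does not exist. For any symmetric tuple, $\mathscr P=|\mathcal N|^2-2\sum_{\alpha,\beta,\gamma}\langle A_\gamma C_{\alpha\beta}A_\gamma,C_{\alpha\beta}\rangle$ with $C_{\alpha\beta}=[A_\alpha,A_\beta]$, so there are no $\operatorname{tr}A_\gamma$ terms to cancel. Moreover, $(\lambda_i-\lambda_j)^2\le2|P|^2$ is already sharp for the trace-free matrix $\operatorname{diag}(1,-1,0,\ldots,0)$. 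What trace-freeness actually improves is the eigenvalue product, $\lambda_i\lambda_j\le\frac{n-2}{2n}|A|^2-\frac{n-1}{2n}(\lambda_i-\lambda_j)^2$. Apply it in the eigenbasis of $A_\gamma$, with Cauchy--Schwarz on the resulting fourth-power gap term when $\gamma\in\{\alpha,\beta\}$. After normalizing $S=1$ and diagonalizing $\mathcal A$ (with $x_\alpha=|A_\alpha|^2$, $U_{\alpha\beta}=|C_{\alpha\beta}|^2$, $p=|\mathcal N|^2$, $u=\sum x_\alpha^2$, $v=\sum x_\alpha^3$), this gives
\[
 \mathscr P\ge p-\frac{n-2}{n}J+\frac{2(n-1)}{n}\sum_{\alpha<\beta}\frac{(x_\alpha+x_\beta)U_{\alpha\beta}^2}{x_\alpha x_\beta}
 \ge p-\frac{n-2}{n}J+\frac{2(n-1)}{n}\,\frac{p^2}{u-v}.
\]
So the gain is a positive term quadratic in $p$, not a term like $-c_2J^2/S$. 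Even this is useless without the ingredient that couples $\mathscr P$ to the deficit: $p\ge a:=(1-D)/2$, equivalently $|\mathcal N|^2\ge u+J-1$, which rests on Lu's weighted inequality. With only $p\ge0$ you get nothing beyond $\mathscr P\ge-\frac{n-2}{n}J$, whereas at $D=0$ you must reach $\alpha_n>1$.

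Second, the constraint $v\ge u^2$ in the paper refers to the moments $u=\sum x_\alpha^2$, $v=\sum x_\alpha^3$ above. It serves only to give $u-v\le u(1-u)$; then $J=1+a-u$, and completing the square first in $a$ and then in $u$ finishes the proof. In your variables $u=J/S^2$, $v=\sum\lambda_\alpha^2/S^2$, the constraint is false. Take $n=3$, $A_1=\frac12\operatorname{diag}(1,-1,0)$, and $A_2$ with $(A_2)_{12}=(A_2)_{21}=\frac12$ and all other entries zero. Then $S=1$, $v=\frac12$, $u=J=1$ and $D=0$. This Li--Li equality configuration is the critical one: here $\mathscr P=\frac32$ against $\alpha_3\approx1.433$, and both inequalities in the display above are equalities. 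Your region $u^2\le v\le1$ discards this configuration, so proving $f(u,v)\ge0$ there would not prove the proposition. Finally, strictness comes from strict numerical margins, not from an incompatibility with trace-freeness. For $D<1$ the margin is $\frac5{144}+\frac{3\sqrt6}{8}>\frac{137}{144}$. The case $D\ge1$ is treated separately via $\mathscr P\ge-\frac{n-2}{n}$, with margin $\frac13+\frac{151}{144n}$.
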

\begin{proof}
See Subsection~\ref{subsec:tracefree}. 
\end{proof}

\section{Proof of the curvature gap}\label{sec:main-proof}

Retain the decomposition $\Phi=\mathcal L+\mathcal C$ and the normal
part $\mathcal N$ from Section~\ref{sec:normal-projection}. Pairing
Ricci commutation with $\mathcal N$ and integrating by parts gives
an identity involving only first derivatives.

\begin{lemma}\label{lem:normal-integration}
Define $(B_{\alpha k})_{ij}:=h^\alpha_{ij;k}$, using the full
covariant derivative of the normal-valued tensor $h$, and put
\[
 (M_k)_{\alpha\beta}=\ip{A_\alpha}{B_{\beta k}},\qquad
 W=\sum_{k,\alpha,\beta}
   \ip{[A_\alpha,A_\beta]}{[B_{\alpha k},B_{\beta k}]}.
\]
For every closed minimal immersion,
\begin{equation}\label{eq:normal-exact}
 \int_M(\mathscr P-2J)\dd
 =\int_M\left(W+\sum_k|M_k-M_k^{\mathsf T}|^2\right)\dd,
\end{equation}
where \(M_k^{\mathsf T}\) denotes the transpose of \(M_k\).
Moreover, the following pointwise estimate holds:
\begin{equation*}
 \mathscr R:=3S|\nabla h|^2-W-\sum_k|M_k-M_k^{\mathsf T}|^2\ge0.
\end{equation*}
\end{lemma}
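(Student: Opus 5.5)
The plan is to prove the integral identity \eqref{eq:normal-exact} first, by computing $\int_M\ip{\Phi}{\mathcal N}\dd$ in two ways, and then to prove the pointwise bound $\mathscr R\ge0$ as a separate algebraic inequality at each point.

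\emph{First computation (algebraic).} I would use the decomposition $\Phi=\mathcal L+\mathcal C$ from \eqref{eq:LC}, so that $\ip{\Phi}{\mathcal N}=\ip{\mathcal L}{\mathcal N}+\mathscr P$. Insert the explicit components of $\mathcal L^\alpha_{ijkl}$ and of $\mathcal N^\alpha_{ijkl}=\sum_\beta h^\beta_{ij}[A_\beta,A_\alpha]_{kl}$. Each Kronecker delta contracts a tangent index of $A_\beta$ against an index of the commutator $[A_\beta,A_\alpha]$. This should give traces of the form $\operatorname{tr}(A_\alpha A_\beta[A_\beta,A_\alpha])$, and by cyclicity these collapse to multiples of $|[A_\alpha,A_\beta]|^2$. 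The expected outcome is
\[
 \ip{\mathcal L}{\mathcal N}=-2J,
\]
which is what makes the left side of \eqref{eq:normal-exact} equal to $\int_M(\mathscr P-2J)\dd$. The sign has to be checked against the conventions fixed in \eqref{eq:Gauss-prelim}--\eqref{eq:ricci-comm-prelim}.

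\emph{Second computation (integration by parts).} By \eqref{eq:commutator}, $\Phi^\alpha_{ijkl}=h^\alpha_{ij;kl}-h^\alpha_{ij;lk}$. Since $\mathcal N$ is skew in $(k,l)$, this gives $\ip{\Phi}{\mathcal N}=2\sum h^\alpha_{ij;kl}\mathcal N^\alpha_{ijkl}$. Integrating by parts in the index $l$ moves the derivative onto $\mathcal N$:
\[
 \int_M\ip{\Phi}{\mathcal N}\dd=-2\int_M\sum h^\alpha_{ij;k}\,(\mathcal N^\alpha_{ijkl})_{;l}\dd .
\]
The derivative $(\mathcal N^\alpha_{ijkl})_{;l}$ splits into three groups:
\begin{itemize}
\item terms $h^\beta_{ij;l}[A_\beta,A_\alpha]_{kl}$;
\item terms $h^\beta_{ij}[B_{\beta l},A_\alpha]_{kl}$;
\item terms $h^\beta_{ij}[A_\beta,B_{\alpha l}]_{kl}$.
\end{itemize}
Codazzi makes $h_{ijk}^\alpha$ totally symmetric, so the pair $(k,l)$ can be traded for $(i,j)$. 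In the first group this turns the contraction into $\sum_{\alpha,\beta}\ip{[A_\alpha,A_\beta]}{[B_{\alpha k},B_{\beta k}]}$, i.e. $W$. In the other two groups, the tangent contraction of $A_\beta$ against $B_{\alpha k}$ produces the entries $(M_k)_{\beta\alpha}=\ip{A_\beta}{B_{\alpha k}}$. After antisymmetrising in $(\alpha,\beta)$ these combine into $\sum_k|M_k-M_k^{\mathsf T}|^2$. Minimality, in the form $\sum_k h^\alpha_{ik;k}=0$, kills the divergence-type leftovers. Bookkeeping of the factors of $2$ then gives \eqref{eq:normal-exact}.

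\emph{Pointwise estimate (main obstacle).} For this I would fix $k$ and prove
\[
 W_k+|M_k-M_k^{\mathsf T}|^2\le 3S\sum_\alpha|B_{\alpha k}|^2,
\]
which is a purely algebraic statement about two tuples $(A_\alpha)$ and $(B_\alpha)$. My first attempt is to view the left side as a quadratic form in $B$ and bound it by polarizing the Li--Li inequality $Q\le\frac32S^2$. One applies Li--Li to the tuple $A+tB$, or uses the mixed-term structure of $Q$ directly: the $t^2$-coefficient of $Q(A+tB)$ contains exactly
\[
 2\sum|[A_\alpha,B_\beta]|^2\text{-type terms},\qquad 2\,W\text{-type terms},\qquad \sum\ip{A_\alpha}{B_\beta}^2\text{-type terms}.
\]
Since Li--Li bounds the total $Q(A+tB)$ rather than its individual coefficients, this only yields the estimate after an extra argument.

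If polarization fails, the fallback is direct. Expand
\[
 W=\sum_{\alpha,\beta}\operatorname{tr}\bigl([A_\alpha,A_\beta][B_\alpha,B_\beta]\bigr)=2\sum_{\alpha,\beta}\ip{[A_\alpha,B_\alpha]}{\cdots},
\]
bound each commutator through \eqref{eq:BW}, and bound $|M_k-M_k^{\mathsf T}|^2\le 2\sum_{\alpha,\beta}\ip{A_\alpha}{B_\beta}^2\le 2S|B_k|^2$ by Cauchy--Schwarz. It then remains to show $W_k\le S|B_k|^2$. This should follow by writing $W_k$ as $\sum\operatorname{tr}(A_\alpha A_\beta[B_\alpha,B_\beta])$ and applying \eqref{eq:BW} together with Cauchy--Schwarz over the normal indices. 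Getting the constant exactly $3$ is the delicate point, and I expect the most work there.
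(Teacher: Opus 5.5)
Your derivation of the integral identity follows the paper's route, and it works: $\langle\mathcal L,\mathcal N\rangle=-2J$ from the explicit components, integration by parts of $2\sum h^\alpha_{ij;kl}\mathcal N^\alpha_{ijkl}$, the three-term splitting of the divergence, total symmetry of $h^\alpha_{ijk}$, and $\sum_l h^\alpha_{rll}=0$. The gap is in the pointwise bound $\mathscr R\ge0$. Neither of your fallback estimates gives the constants you need.

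\emph{The $M_k$ term.} The step $|M_k-M_k^{\mathsf T}|^2\le2\sum_{\alpha,\beta}\langle A_\alpha,B_{\beta k}\rangle^2=2|M_k|^2$ is false. In general one only has $|M-M^{\mathsf T}|^2=2|M|^2-2\operatorname{tr}(M^2)\le4|M|^2$, with equality for skew $M$, and skew $M_k$ does occur. Take Frobenius-orthonormal $A_1,A_2$ and $B_{1k}=A_2$, $B_{2k}=-A_1$. Then $M_k=\bigl(\begin{smallmatrix}0&-1\\1&0\end{smallmatrix}\bigr)$, so $|M_k-M_k^{\mathsf T}|^2=8>4=2|M_k|^2$, while $2S\sum_\alpha|B_{\alpha k}|^2=8$. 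The target bound is therefore sharp, and any route through $|M_k|$ (entrywise Cauchy--Schwarz) loses a factor $2$. The missing idea is to keep the factorization $M_k-M_k^{\mathsf T}=U^{\mathsf T}V-V^{\mathsf T}U$, where $U,V$ have the vectorized $A_\alpha$ and $B_{\alpha k}$ as columns. The pairing of this matrix with a skew $K$ of unit Frobenius norm equals $2\langle V,UK\rangle$. Since the singular values of a real skew matrix occur in pairs, $|K|_{\mathrm{op}}\le1/\sqrt2$. This yields $|M_k-M_k^{\mathsf T}|^2\le2|U|^2|V|^2=2S\sum_\alpha|B_{\alpha k}|^2$.

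\emph{The $W$ term.} Applying \eqref{eq:BW} pair by pair cannot give $W_k\le S\sum_\alpha|B_{\alpha k}|^2$. It only yields $W_k\le2\sum_{\alpha\ne\beta}|A_\alpha||A_\beta||B_{\alpha k}||B_{\beta k}|$. If $|A_\alpha|=|B_{\alpha k}|=1$ for $\alpha=1,2,3$, this upper bound is $12$, while $S\sum_\alpha|B_{\alpha k}|^2=9$; in general the method gives constant $2$, not $1$. The needed input is the DDVV inequality $\sum_{\alpha,\beta}|[A_\alpha,A_\beta]|^2\le S^2$ for arbitrary symmetric tuples, which is \eqref{eq:ddvv-new}, proved via Lu's weighted inequality. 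Apply Cauchy--Schwarz over ordered pairs $(\alpha,\beta)$, then DDVV both to $(A_\alpha)$ and to $(B_{\alpha k})_\alpha$. This gives $W_k\le\sqrt J\,\bigl(\sum_{\alpha,\beta}|[B_{\alpha k},B_{\beta k}]|^2\bigr)^{1/2}\le S\sum_\alpha|B_{\alpha k}|^2$.

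The constant $3$ is exactly $1+2$ from these two sharp bounds. Correct versions of your estimates give $2+4=6$, which does not prove the lemma. It would also break the lemma's use: the coefficient $2\sqrt6-3$ of $\int_M S|\nabla h|^2\dd$ in \eqref{eq:normal-final-integral} would become $2\sqrt6-6<0$. Polarizing Li--Li is not needed.
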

\begin{proof}
All contractions are invariant under orthogonal tangent and normal
changes of frame. We compute at a fixed but arbitrary point \(p\in M\), using local orthonormal tangent and normal frames that are synchronous at \(p\), so that the coefficients of the tangent Levi-Civita connection and of the normal connection vanish at \(p\). No normal frame diagonalizing the fundamental matrix is differentiated.
In this notation
$(B_{\alpha k})_{ij}=h^\alpha_{ij;k}$, and each $B_{\alpha k}$ is
symmetric. Codazzi and differentiated minimality give
\[
 h^\alpha_{ijk}=h^\alpha_{jik}=h^\alpha_{ikj},\qquad
 \sum_i h^\alpha_{iik}=0.
\]
Write $C_{\alpha\beta}=[A_\alpha,A_\beta]$ in this proof.
The explicit formula for $\mathcal L$ gives
$\langle\mathcal L,\mathcal N\rangle=-2J$.
Indeed, using the symmetry \(h^\beta_{ij}=h^\beta_{ji}\), we have
$$
\begin{aligned}
\sum_{i,j}
\bigl(h^\alpha_{kj}\delta_{il}
      -h^\alpha_{lj}\delta_{ik}\bigr)h^\beta_{ij}
&=
\sum_j h^\alpha_{kj}h^\beta_{lj}
-\sum_j h^\alpha_{lj}h^\beta_{kj}  \\
&=(A_\alpha A_\beta-A_\beta A_\alpha)_{kl}
=(C_{\alpha\beta})_{kl}.
\end{aligned}
$$
Hence, after pairing with
\((C_{\beta\alpha})_{kl}=-(C_{\alpha\beta})_{kl}\)
and summing over \(\alpha,\beta,k,l\), these two terms contribute
$$
-\sum_{\alpha,\beta}|C_{\alpha\beta}|^2=-J.
$$
Similarly,
$
\sum_{i,j}
\bigl(h^\alpha_{ik}\delta_{jl}
      -h^\alpha_{il}\delta_{jk}\bigr)h^\beta_{ij}
=(C_{\alpha\beta})_{kl},
$
so the remaining two terms give another \(-J\). Therefore
$$
\langle\mathcal L,\mathcal N\rangle=-2J.
$$

The tensor $\mathcal N$ is skew in its last two indices. Thus
\eqref{eq:commutator} and integration by parts imply
\[
\begin{split}
 \int_M\langle\Phi,\mathcal N\rangle\dd
 &=2\int_M\sum_{\alpha,i,j,k,l}
          h^\alpha_{ij;kl}\mathcal N_{\alpha ijkl}\dd\\
 &=-2\int_M\sum_{\alpha,i,j,k,l}
          h^\alpha_{ijk}\nabla_l\mathcal N_{\alpha ijkl}\dd.
\end{split}
\]
There is no boundary term because $M$ is closed. The covariant product
rule applies to the full normal-valued tensor. The following identities are covariant and hence valid in arbitrary local orthonormal tangent and normal frames. The synchronous choice at \(p\) is used only to make the corresponding tangent and normal connection coefficients vanish at the point of calculation.
\[
 \nabla_l\mathcal N_{\alpha ijkl}
 =\sum_\beta h^\beta_{ijl}(C_{\beta\alpha})_{kl}
  +\sum_\beta h^\beta_{ij}\nabla_l(C_{\beta\alpha})_{kl}.
\]
The contraction from the first summand is
\[
\begin{split}
 -2\sum_{\alpha,\beta,i,j,k,l}
       h^\alpha_{ijk}h^\beta_{ijl}(C_{\beta\alpha})_{kl}
 &=2\sum_{\alpha,\beta,i}
           \ip{C_{\alpha\beta}}{B_{\alpha i}B_{\beta i}}\\
 &=\sum_{\alpha,\beta,i}
           \ip{C_{\alpha\beta}}{[B_{\alpha i},B_{\beta i}]}=W.
\end{split}
\]
The first equality uses full symmetry of $h^\alpha_{ijk}$.
The second uses $\langle C,UV\rangle=\frac12\langle C,[U,V]\rangle$
for skew $C$ and symmetric $U,V$.

For the second summand of the product rule, expand the divergence:
\[
\begin{split}
 \sum_l\nabla_l(C_{\beta\alpha})_{kl}
 =\sum_{r,l}\bigl(&h^\beta_{krl}h^\alpha_{rl}
            +h^\beta_{kr}h^\alpha_{rll}
            -h^\alpha_{krl}h^\beta_{rl}
            -h^\alpha_{kr}h^\beta_{rll}\bigr).
\end{split}
\]
The second and fourth sums vanish, since
$\sum_l h^\alpha_{rll}=\sum_l h^\alpha_{llr}=0$ and similarly for
$\beta$. The remaining terms are
\[
 \sum_l\nabla_l(C_{\beta\alpha})_{kl}
   =(M_k)_{\alpha\beta}-(M_k)_{\beta\alpha}.
\]
Their contraction is therefore
\[
 -2\sum_{\alpha,\beta,k}(M_k)_{\beta\alpha}
        \bigl((M_k)_{\alpha\beta}-(M_k)_{\beta\alpha}\bigr)
   =\sum_k|M_k-M_k^{\mathsf T}|^2.
\]
Here interchange of $\alpha,\beta$ identifies the sums of the squares
of the two transposed entries. Since
$\langle\Phi,\mathcal N\rangle=\mathscr P-2J$, this proves
\eqref{eq:normal-exact}.

We next bound its two first-derivative terms. The DDVV estimate
$J\le S^2$, proved in \eqref{eq:ddvv-new} (see also \cite{GeTang2008,Lu2011}), also applies to the
symmetric tuple $(B_{\alpha k})_\alpha$ for each fixed $k$.
Consequently,
\[
\begin{split}
 W&\le\sum_k\sqrt J
        \left(\sum_{\alpha,\beta}
                 |[B_{\alpha k},B_{\beta k}]|^2\right)^{1/2}\le S\sum_{k,\alpha}|B_{\alpha k}|^2=S|\nabla h|^2.
\end{split}
\]
For real rectangular matrices \(U\) and \(V\) of the same size, we use
$$
 |U^{\mathsf T}V-V^{\mathsf T}U|^2
 \le 2|U|^2|V|^2.
$$
Indeed,
if \(U\) and \(V\) have only one column, the left-hand side vanishes, so
the estimate is immediate. Otherwise, let \(K\) be a skew-symmetric
matrix with Frobenius norm \(|K|=1\), and denote by
$$
 |K|_{\mathrm{op}}:=\sup_{|x|=1}|Kx|
$$
its operator norm, equivalently its largest singular value. The nonzero
singular values of a real skew-symmetric matrix occur in equal pairs.
Hence the normalization \(|K|=1\) implies
$$
 |K|_{\mathrm{op}}\le\frac1{\sqrt2}.
$$
Using \(K^{\mathsf T}=-K\) and the Frobenius inner product, we obtain
$$
\begin{aligned}
\left|\left\langle
 U^{\mathsf T}V-V^{\mathsf T}U,K
\right\rangle\right|
&=2|\langle V,UK\rangle|\\
&\le 2|V|\,|UK|\\
&\le 2|U|\,|V|\,|K|_{\mathrm{op}}\\
&\le \sqrt2\,|U|\,|V|.
\end{aligned}
$$
Since \(U^{\mathsf T}V-V^{\mathsf T}U\) is skew-symmetric, its
Frobenius norm is the supremum of its Frobenius pairings with
skew-symmetric matrices \(K\) satisfying \(|K|=1\). 
For each fixed \(k\), apply this estimate to the matrices \(U\) and
\(V\) whose \(\alpha\)-th columns are the vectorizations of
\(A_\alpha\) and \(B_{\alpha k}\), respectively. Then
$$
 |U|^2=\sum_\alpha|A_\alpha|^2=S,
 \qquad
 |V|^2=\sum_\alpha|B_{\alpha k}|^2,
$$
while
$$
 (U^{\mathsf T}V)_{\alpha\beta}
 =\langle A_\alpha,B_{\beta k}\rangle
 =(M_k)_{\alpha\beta}.
$$
Consequently,
$$
 U^{\mathsf T}V-V^{\mathsf T}U
 =M_k-M_k^{\mathsf T},
$$
and therefore
$$
 |M_k-M_k^{\mathsf T}|^2
 \le
 2S\sum_\alpha|B_{\alpha k}|^2.
$$
Summing over \(k\) yields
$$
 \sum_k|M_k-M_k^{\mathsf T}|^2
 \le2S|\nabla h|^2.
$$
Together with \(W\le S|\nabla h|^2\), this gives
$$
 \mathscr R
 =3S|\nabla h|^2-W-\sum_k|M_k-M_k^{\mathsf T}|^2
 \ge0.
$$
All integrations above are with respect to the Riemannian volume
density, so no orientability assumption on \(M\) is required.
\end{proof}

\begin{proof}[\textbf{Proof of Theorem~\ref{thm:main}}]
By Lemma~\ref{lem:simons-lili}, multiplying \eqref{eq:simons} by
$2S$ and integrating by parts gives
\begin{equation}\label{eq:normal-weighted-simons}
 \int_M SD\dd
 =3\int_M S^3\dd-2n\int_M S^2\dd
  -2\int_M S|\nabla h|^2\dd-\int_M|\nabla S|^2\dd.
\end{equation}
Indeed, $S\Delta S=2S|\nabla h|^2+2nS^2-3S^3+SD$ and
$\int_M S\Delta S\dd=-\int_M|\nabla S|^2\dd$.
For $\alpha_n$ from Proposition~\ref{prop:tracefree-tangent}, put
\[
 \mathscr E_n:=\mathscr P-\alpha_nS^3+\sqrt6\,SD.
\]
By Proposition~\ref{prop:tracefree-tangent},
$\mathscr E_n\ge0$ everywhere and $\mathscr E_n>0$ wherever $S>0$.
Lemma~\ref{lem:normal-integration} gives
\[
 \int_M\mathscr P\dd
 =2\int_M J\dd+3\int_M S|\nabla h|^2\dd
  -\int_M\mathscr R\dd.
\]
Substitute this and \eqref{eq:normal-weighted-simons} into
$\int_M\mathscr E_n\dd$. Writing $2J=2S^2-2(S^2-J)$ and
rearranging yields
\begin{equation}\label{eq:normal-final-integral}
\begin{split}
 &\qquad\int_M S^2\bigl((3\sqrt6-\alpha_n)S-2n\sqrt6+2\bigr)\dd\\
 &\quad=(2\sqrt6-3)\int_M S|\nabla h|^2\dd
      +\sqrt6\int_M|\nabla S|^2\dd\\
 &\qquad\quad+2\int_M(S^2-J)\dd
      +\int_M\mathscr E_n\dd+\int_M\mathscr R\dd.
\end{split}
\end{equation}
Every term on the right is nonnegative, by $2\sqrt6-3>0$,
the DDVV inequality $J\le S^2$, and
Lemma~\ref{lem:normal-integration}. Moreover, $h\not\equiv0$
implies that $\{S>0\}$ is a nonempty open set, hence has positive
Riemannian measure. Therefore
\[
 \int_M\mathscr E_n\dd>0.
\]

On the other hand,
\[
 3\sqrt6-\alpha_n=2\sqrt6+\frac43-\frac{137}{144n}>0
 \qquad(n\ge3).
\]
If
\[
 S_{\max}\le
 \frac{2n\sqrt6-2}{3\sqrt6-\alpha_n}
 =\frac{n(288\sqrt6\,n-288)}{(288\sqrt6+192)n-137},
\]
the left-hand side of \eqref{eq:normal-final-integral} is nonpositive,
contradicting the strict positivity of its right-hand side. This proves the first inequality in \eqref{eq:main-gap}, including the endpoint. No division by $S$ is
used, so zeros of the second fundamental form cause no difficulty.

To obtain the uniform coefficient, direct subtraction gives
\[
\begin{split}
 &\frac{n(288\sqrt6\,n-288)}{(288\sqrt6+192)n-137}
   -\frac{2n}{3}-\frac{31-9\sqrt6}{75}(n-2)\\
 &\quad=
 \frac{(13167\sqrt6-29703)n+2466\sqrt6-8494}
      {75\bigl((288\sqrt6+192)n-137\bigr)}>0.
\end{split}
\]
The denominator is positive for $n\ge3$. Since
$\sqrt6>22/9$ (equivalently $486>484$), the numerator is strictly
greater than
\[
 2483n-2466\ge4983>0.
\]
Thus the second inequality in \eqref{eq:main-gap} follows. Both estimates hold without a
pointwise lower bound on $S$, constancy of $S$, or flatness of the
normal bundle.
\end{proof}

\appendix

\section{Proof of Propositions ~\ref{prop:normal-tangents} and ~\ref{prop:tracefree-tangent}}
\label{sec:normal-certificates}

Lemmas~\ref{lem:normalized-scalar-reduction} and~\ref{lem:moment-bounds}
provide the two algebraic ingredients used in the proof of
Proposition~\ref{prop:normal-tangents}: the scalar reduction and the
moment estimates for $u-v$. These are applied in
Subsections~\ref{subsec:small-dimensions} and~\ref{subsec:tail}
to verify the four parameter choices. No trace condition is required
in Subsections~\ref{subsec:normalized}--\ref{subsec:tail}.
Subsection~\ref{subsec:tracefree} proves
Proposition~\ref{prop:tracefree-tangent}, where trace-freeness is essential.
Throughout, the fundamental matrix is diagonalized only pointwise, and
the resulting normal frame is never differentiated.

\subsection{Normalized scalar reduction}
\label{subsec:normalized}
\begin{lemma}\label{lem:normalized-scalar-reduction}
Let $(A_\alpha)$ be a finite tuple of real symmetric matrices. If $S>0$,
normalize by replacing $A_\alpha$ with $A_\alpha/\sqrt S$, diagonalize the
fundamental matrix by an orthogonal normal change of frame, omit its zero
eigenvalues, and write
\[
 x_\alpha=|A_\alpha|^2>0,\qquad \sum_\alpha x_\alpha=1,\qquad
 u=\sum_\alpha x_\alpha^2,\qquad v=\sum_\alpha x_\alpha^3,
\]
\[
 U_{\alpha\beta}=|[A_\alpha,A_\beta]|^2,\qquad
 p=\sum_{\alpha<\beta}(x_\alpha+x_\beta)U_{\alpha\beta}.
\]
For the normalized tuple one has
\[
 J\le1,\qquad \mathscr P\ge-1,\qquad
 D=3-2u-2J\ge0.
\]
If $D<1$ and
\[
 a:=\frac{1-D}{2},
\]
then the fundamental matrix has at least two positive eigenvalues,
$u-v>0$, and
\begin{equation}\label{eq:scalar-reduction-new}
 \mathscr P\ge u-1+\frac{2a^2}{u-v},\qquad
 0<a\le\min\{u,1/2\},\qquad J=1+a-u.
\end{equation}
Moreover,
\begin{equation*}
 p\ge a,\qquad
 \sum_{\alpha<\beta}
 \frac{(x_\alpha+x_\beta)U_{\alpha\beta}^2}{x_\alpha x_\beta}
 \ge\frac{p^2}{u-v}.
\end{equation*}
In particular, $u<1$. All these assertions are pointwise and remain valid
when positive eigenvalues of the fundamental matrix are repeated.
\end{lemma}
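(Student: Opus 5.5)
The plan is to reduce everything to the diagonal normal frame and compute $\mathscr P$ explicitly. After normalizing to $S=1$ and rotating the normal frame so that $\langle A_\alpha,A_\beta\rangle=x_\alpha\delta_{\alpha\beta}$, we get $\sum_{\alpha,\beta}\langle A_\alpha,A_\beta\rangle^2=u$, hence $Q=u+J$ and $D=3-2u-2J$. The bound $J\le1$ is the DDVV inequality. Discarding matrices with $x_\alpha=0$ is harmless: such $A_\alpha$ vanish, so they contribute nothing to $J$, $\mathcal N$ or $\mathscr P$. This step does not care whether eigenvalues are repeated.

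\textbf{Computing $\mathscr P$.} Write $\mathcal C=\mathcal G+\mathcal N$ with $\mathcal G_\alpha^{kl}=[A_\alpha,G^{kl}]$, so that $\mathscr P=|\mathcal N|^2+\langle\mathcal G,\mathcal N\rangle$.
\begin{itemize}
\item Expanding $\mathcal N_\alpha^{kl}=\sum_\beta A_\beta(C_{\beta\alpha})_{kl}$ and using the diagonal Gram matrix gives
\[
|\mathcal N|^2=\sum_{\alpha,\beta}x_\beta U_{\alpha\beta}=\sum_{\alpha<\beta}(x_\alpha+x_\beta)U_{\alpha\beta}=p.
\]
\item For the cross term, move the commutator across: $\langle[A_\alpha,G^{kl}],A_\beta\rangle=\pm\langle G^{kl},[A_\alpha,A_\beta]\rangle$. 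Contracting the definition of $G^{kl}$ against the skew matrix $C_{\alpha\beta}$ produces $2\sum_\gamma(A_\gamma C_{\alpha\beta}A_\gamma)_{kl}$.
\item Pairing that with $(C_{\beta\alpha})_{kl}$ should give a cross term of the form $2\sum_{\alpha,\beta,\gamma}\langle C_{\alpha\beta},A_\gamma C_{\alpha\beta}A_\gamma\rangle$, with the overall sign fixed by the conventions above.
\end{itemize}
This term is what must be bounded below, by decomposing it pairwise. The target is $\mathscr P\ge u-1+2\sum_{\alpha<\beta}(x_\alpha+x_\beta)U_{\alpha\beta}^2/(x_\alpha x_\beta)$, or at least something implying it.

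\textbf{The crude bound $\mathscr P\ge-1$.} This should follow from $|\mathcal N|^2\ge0$ plus a Cauchy--Schwarz/\eqref{eq:BW}-type bound on the cross term by $S^3=1$.

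\textbf{The bookkeeping under $D<1$.}
\begin{itemize}
\item From $D=3-2u-2J=1-2a$ we read off $J=1+a-u$.
\item $a>0$ is the assumption $D<1$; $a\le1/2$ is $D\ge0$; and $a\le u$ is $J\le1$.
\item If there were only one positive $x_\alpha$, then $J=0$ and $u=1$, so $D=1$, a contradiction. Hence at least two eigenvalues are positive, which gives $u<1$.
\item Since $\sum x_\alpha=1$, we have $u-v=\sum_\alpha x_\alpha^2(1-x_\alpha)>0$.
\end{itemize}

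\textbf{The weighted inequality.} Use the identity $\sum_{\alpha<\beta}(x_\alpha+x_\beta)x_\alpha x_\beta=\sum_{\alpha\ne\beta}x_\alpha^2x_\beta=u-v$. Cauchy--Schwarz with weights $(x_\alpha+x_\beta)x_\alpha x_\beta$ then gives
\[
\Bigl(\sum_{\alpha<\beta}(x_\alpha+x_\beta)U_{\alpha\beta}\Bigr)^2\le(u-v)\sum_{\alpha<\beta}\frac{(x_\alpha+x_\beta)U_{\alpha\beta}^2}{x_\alpha x_\beta},
\]
which is the stated inequality. Combined with $p\ge a$ and the pairwise lower bound on $\mathscr P$, it yields \eqref{eq:scalar-reduction-new}.

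\textbf{Main obstacle.} The hardest steps are $p\ge a=J+u-1$ and the sharp lower bound for the cross term $\langle\mathcal G,\mathcal N\rangle$.

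For $p\ge a$, the direct route using $U_{\alpha\beta}\le2x_\alpha x_\beta$ from \eqref{eq:BW} only gives
\[
J-p=\sum_{\alpha<\beta}(2-x_\alpha-x_\beta)U_{\alpha\beta}\le 2(1-u)-(u-v),
\]
which is not enough by itself. I would instead try to extract $p\ge a$ from the same cubic identity that bounds $\mathscr P$. Writing $\mathscr P$ as $p$ plus a term bounded below by $-(1-u)$ plus nonnegative squares, I would compare with $\mathscr P\le|\mathcal C||\mathcal N|$ or with the Li--Li identity for $D$. Failing that, I would use a sharpened pairwise Li--Li inequality of the form $U_{\alpha\beta}+(\text{Gram contribution})\le\ldots$ for $2\times2$ subsystems, summed with weights $x_\alpha+x_\beta$.
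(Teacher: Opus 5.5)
Your bookkeeping under $D<1$ (the formula $J=1+a-u$, the bounds $0<a\le\min\{u,1/2\}$, the rank argument for $u<1$ and $u-v>0$) is correct, and so is your weighted Cauchy--Schwarz step. Citing DDVV and Li--Li for $J\le1$ and $D\ge0$ is acceptable. However, the two estimates that actually carry the lemma are missing.

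\textbf{The inequality $p\ge a$.} This is $p\ge u+J-1$, and none of your fallbacks can produce it. No pairwise inequality can. The sharp two-matrix bound is $U_{\alpha\beta}\le2x_\alpha x_\beta$, and it is attained. Saturating it with $x_\alpha=1/m$ gives $p=2(u-v)$ and $J=2(1-u)$, so $p\ge u+J-1$ becomes $2/m\ge1$, which is false for $m\ge3$. The global scalar facts DDVV, \eqref{eq:BW} and $D\ge0$ do not suffice either. The data $x=(\tfrac12,\tfrac14,\tfrac14)$, $U_{12}=\tfrac14$, $U_{13}=U_{23}=\tfrac18$ satisfy all of them (with $J=1$, $D=\tfrac14$), yet $p=\tfrac{11}{32}<\tfrac38=u+J-1$.

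The missing idea is Lu's weighted inequality~\cite[Lemma~2]{Lu2011}. If $P$ is diagonal with $|P|=1$ and $P,B_1,\ldots,B_m$ are pairwise Frobenius-orthogonal symmetric matrices, then $\sum_j|[P,B_j]|^2\le\sum_j|B_j|^2+\max_j|B_j|^2$. Apply it with $P=A_\alpha/\sqrt{x_\alpha}$ to the indices $\beta$ with $U_{\alpha\beta}>x_\alpha x_\beta$. This gives $\sum_\beta e_{\alpha\beta}\le1$ for $e_{\alpha\beta}=(U_{\alpha\beta}/(x_\alpha x_\beta)-1)_+$; the example above has $e_{21}+e_{23}=2$, so it is excluded. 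Then write
\[
 p-J+1-u=\sum_{\alpha\ne\beta}\Bigl[x_\alpha x_\beta-\Bigl(1-\frac{x_\alpha+x_\beta}{2}\Bigr)U_{\alpha\beta}\Bigr],
\]
insert $U_{\alpha\beta}\le x_\alpha x_\beta(1+e_{\alpha\beta})$, and use $x_\alpha x_\beta(1-\frac{x_\alpha+x_\beta}{2})\le\frac12[x_\alpha^2(1-x_\alpha)+x_\beta^2(1-x_\beta)]$, whose defect is $\frac12(1-x_\alpha-x_\beta)(x_\alpha-x_\beta)^2$. The error is then at most $\sum_\alpha x_\alpha^2(1-x_\alpha)\sum_\beta e_{\alpha\beta}\le u-v$. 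This is exactly the main term $\frac12\sum_{\alpha\ne\beta}x_\alpha x_\beta(x_\alpha+x_\beta)=u-v$, so $p\ge u+J-1$.

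\textbf{The cross-term lower bound.} You only announce this. Carrying your adjoint computation through gives $\langle[A_\alpha,G^{kl}],A_\beta\rangle=\langle G^{kl},C_{\alpha\beta}\rangle=2\sum_\gamma(A_\gamma C_{\alpha\beta}A_\gamma)_{kl}$. Hence $\mathscr P=p-2\sum_{\alpha,\beta,\gamma}\langle A_\gamma C_{\alpha\beta}A_\gamma,C_{\alpha\beta}\rangle$, and the minus sign is essential. An operator-norm Cauchy--Schwarz bound on this term gives only $\mathscr P\ge p-2J\ge-2$.

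What is needed is the following. In an eigenbasis of $A_\gamma$ the skew matrix $C_{\alpha\beta}$ has zero diagonal, so $\langle A_\gamma C_{\alpha\beta}A_\gamma,C_{\alpha\beta}\rangle=\sum_{i\ne j}\lambda_i\lambda_j(C_{\alpha\beta})_{ij}^2\le\frac12x_\gamma U_{\alpha\beta}$. This already gives $\mathscr P\ge p-J\ge-1$. For $\gamma=\alpha$ one sharpens this to $\lambda_i\lambda_j\le\frac12\bigl(x_\alpha-(\lambda_i-\lambda_j)^2\bigr)$. Since $(C_{\alpha\beta})_{ij}=(\lambda_i-\lambda_j)b_{ij}$ with $A_\beta=(b_{ij})$, Cauchy--Schwarz gives $U_{\alpha\beta}^2\le x_\beta\sum_{i,j}(\lambda_i-\lambda_j)^4b_{ij}^2$, which yields the gain $-U_{\alpha\beta}^2/(2x_\beta)$; the case $\gamma=\beta$ is symmetric.

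Summing over both orders of each pair gives
\[
 \mathscr P\ge p-J+2\sum_{\alpha<\beta}\frac{(x_\alpha+x_\beta)U_{\alpha\beta}^2}{x_\alpha x_\beta}\ge p-J+\frac{2p^2}{u-v},
\]
which is increasing in $p\ge0$. Only then do $p\ge a$ and $J=1+a-u$ give \eqref{eq:scalar-reduction-new}. Note that your stated target $\mathscr P\ge u-1+2\sum_{\alpha<\beta}(\cdots)$ already presupposes $p-J\ge u-1$, that is, the unproved inequality $p\ge a$.
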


\begin{proof}
If $S=0$, every matrix and every tensor under consideration vanishes.
Assume $S>0$ and replace $A_\alpha$ by $A_\alpha/\sqrt S$.
Throughout the scalar argument we keep the same notation for the
normalized tuple and its invariants, so $S=1$. In particular, the
normalized $J$, $D$, and $\mathscr P$ are the original quantities divided
by $S^2$, $S^2$, and $S^3$, respectively. Indeed, $J$ and $D$ have degree four in the matrices, whereas
$\mathscr P=\langle\mathcal C,\mathcal N\rangle$ and both terms
$S(S^2-J)$ and $SD$ have degree six. Multiplying the normalized
inequalities by the original $S^3$ will therefore recover the stated
homogeneous inequalities.

Diagonalize the fundamental matrix $\mathcal A$ by an orthogonal normal change of frame and omit
its zero eigenvalues, which correspond to zero matrices. Write
\[
 x_\alpha=|A_\alpha|^2>0,\qquad \sum_\alpha x_\alpha=1,\qquad
 u=\sum_\alpha x_\alpha^2,\qquad v=\sum_\alpha x_\alpha^3,
 \qquad U_{\alpha\beta}=|[A_\alpha,A_\beta]|^2.
\]
Thus $\langle A_\alpha,A_\beta\rangle=0$ for $\alpha\ne\beta$,
and $J=\sum_{\alpha,\beta}U_{\alpha\beta}$. For $\alpha\ne\beta$, set
\[
 e_{\alpha\beta}
 =\left(\frac{U_{\alpha\beta}}{x_\alpha x_\beta}-1\right)_+,
 \qquad e_{\alpha\alpha}=0,
\]
where $z_+=\max\{z,0\}$.
Fix $\alpha$ and let $\mathcal I_\alpha$ be the set of indices $\beta$
for which $e_{\alpha\beta}>0$. The matrices
$A_\beta/\sqrt{x_\beta}$, $\beta\in\mathcal I_\alpha$, form an
orthonormal family for the Frobenius inner product. If this family has
$m>0$ members, diagonalize $A_\alpha/\sqrt{x_\alpha}$ by a common
orthogonal tangent change. This preserves the Frobenius orthonormality
of the other matrices. Lu's weighted inequality
\cite[Lemma~2]{Lu2011} states that if $P$ is a diagonal symmetric
matrix with $|P|=1$ and $B_1,\ldots,B_m$ are symmetric matrices such
that $P,B_1,\ldots,B_m$ are pairwise Frobenius-orthogonal, then
\[
 \sum_{j=1}^m|[P,B_j]|^2
 \le\sum_{j=1}^m|B_j|^2+\max_{1\le j\le m}|B_j|^2.
\]
No trace-free assumption is required.  In our application,
$P=A_\alpha/\sqrt{x_\alpha}$ and the matrices
$A_\beta/\sqrt{x_\beta}$, $\beta\in\mathcal I_\alpha$, all
have unit Frobenius norm and are mutually orthogonal, including
orthogonality to $P$. Hence
\[
 \sum_{\beta\in\mathcal I_\alpha}
 \frac{U_{\alpha\beta}}{x_\alpha x_\beta}\le m+1.
\]
Subtracting $m$, and treating an empty family by the empty-sum convention,
proves
\[
 \sum_\beta e_{\alpha\beta}\le1.
\]
Since $e_{\alpha\beta}=e_{\beta\alpha}$, we obtain
\begin{equation}\label{eq:ddvv-new}
\begin{split}
 J&\le1-u+\sum_{\alpha\ne\beta}x_\alpha x_\beta e_{\alpha\beta}\\
  &\le1-u+\frac12\sum_{\alpha\ne\beta}
               (x_\alpha^2+x_\beta^2)e_{\alpha\beta}\\
  &=1-u+\sum_\alpha x_\alpha^2\sum_\beta e_{\alpha\beta}\le1.
\end{split}
\end{equation}
Here and below all sums over $\alpha\ne\beta$ count ordered pairs.
By homogeneity, \eqref{eq:ddvv-new} is the DDVV bound $J\le S^2$
for every symmetric tuple.

Put
\[
 p=\sum_{\alpha<\beta}(x_\alpha+x_\beta)U_{\alpha\beta}.
\]
We next prove $p\ge u+J-1$. For two distinct indices the exact identity
\[
\begin{split}
 &\frac{x_\alpha^2(1-x_\alpha)+x_\beta^2(1-x_\beta)}2
  -x_\alpha x_\beta\left(1-\frac{x_\alpha+x_\beta}{2}\right)\\
 &\qquad=\frac{(1-x_\alpha-x_\beta)(x_\alpha-x_\beta)^2}{2}\ge0
\end{split}
\]
uses $x_\alpha+x_\beta\le1$. Moreover,
\[
\begin{split}
 p-J+1-u
 &=\sum_{\alpha\ne\beta}
 \left[x_\alpha x_\beta-
       \left(1-\frac{x_\alpha+x_\beta}{2}\right)U_{\alpha\beta}\right]\\
 &\ge u-v-\sum_{\alpha\ne\beta}e_{\alpha\beta}x_\alpha x_\beta
                \left(1-\frac{x_\alpha+x_\beta}{2}\right)\\
 &\ge u-v-\sum_\alpha x_\alpha^2(1-x_\alpha)
                    \sum_\beta e_{\alpha\beta}\ge0.
\end{split}
\]
In the second line we used
$U_{\alpha\beta}\le x_\alpha x_\beta(1+e_{\alpha\beta})$ and
$\frac12\sum_{\alpha\ne\beta}x_\alpha x_\beta(x_\alpha+x_\beta)=u-v$.
The third line follows from the preceding scalar identity and the symmetry
of $e$. On the other hand, \eqref{eq:BW} implies
\[
 p\le2\sum_{\alpha<\beta}(x_\alpha+x_\beta)x_\alpha x_\beta
   =2\sum_\alpha x_\alpha^2(1-x_\alpha)
   \le\frac12\sum_\alpha x_\alpha=\frac12,
\]
since $x_\alpha(1-x_\alpha)\le1/4$. Combining the two bounds on $p$ gives
\[
 u+J\le p+1\le\frac32,\qquad D=3-2u-2J\ge0.
\]
By homogeneity this also proves the Li--Li deficit inequality \eqref{eq:QD} for the
original tuple.

For $C_{\alpha\beta}=[A_\alpha,A_\beta]$, the exact tensor contractions are
\begin{equation}\label{eq:normal-contraction-new}
 \begin{split}
 |\mathcal N|^2&=p,\\
 \mathscr P&=p-2\sum_{\alpha,\beta,\gamma}
       \ip{A_\gamma C_{\alpha\beta}A_\gamma}{C_{\alpha\beta}}.
 \end{split}
\end{equation}
Indeed, orthogonality in the resulting normal frame gives
\[
 |\mathcal N|^2
 =\sum_{\alpha,\beta}x_\beta U_{\alpha\beta}
 =\sum_{\alpha<\beta}(x_\alpha+x_\beta)U_{\alpha\beta}.
\]
For fixed \(\alpha,\beta,k,l\), substituting the definition of \(G\) gives
$$
\begin{aligned}
&\sum_{i,j,r}
(A_\beta)_{ij}(A_\alpha)_{rj}G_{rikl}\\
&\quad=
\sum_{\gamma,i,j,r}
(A_\beta)_{ij}(A_\alpha)_{rj}
\Bigl(
(A_\gamma)_{rk}(A_\gamma)_{il}
-(A_\gamma)_{rl}(A_\gamma)_{ik}
\Bigr)\\
&\quad=
\sum_\gamma
\Bigl[
(A_\gamma A_\alpha A_\beta A_\gamma)_{kl}
-
(A_\gamma A_\alpha A_\beta A_\gamma)_{lk}
\Bigr].
\end{aligned}
$$
Since \(A_\alpha,A_\beta,A_\gamma\) are symmetric,
$$
(A_\gamma A_\alpha A_\beta A_\gamma)_{lk}
=
(A_\gamma A_\beta A_\alpha A_\gamma)_{kl}.
$$
Therefore
$$
\begin{aligned}
\sum_{i,j,r}
(A_\beta)_{ij}(A_\alpha)_{rj}G_{rikl}
&=
\sum_\gamma
\bigl(
A_\gamma A_\alpha A_\beta A_\gamma
-
A_\gamma A_\beta A_\alpha A_\gamma
\bigr)_{kl}\\
&=
\sum_\gamma
(A_\gamma C_{\alpha\beta}A_\gamma)_{kl}.
\end{aligned}
$$
By the definition
\[
 \mathcal C_\alpha^{kl}=[A_\alpha,G^{kl}]+\mathcal N_\alpha^{kl},
\]
we have
\[
 \mathscr P=\ip{\mathcal C}{\mathcal N}
 =\sum_{\alpha,k,l}\ip{[A_\alpha,G^{kl}]}{\mathcal N_\alpha^{kl}}
  +|\mathcal N|^2.
\]
Thus it remains to compute the first term. Since
$$
[A_\alpha,G^{kl}]_{ij}
=
\sum_r
\bigl(
(A_\alpha)_{ir}G_{rjkl}
-
G_{irkl}(A_\alpha)_{rj}
\bigr),
$$
and \(G_{irkl}=-G_{rikl}\), the second term contributes
$$
\sum_{i,j,r}
(A_\beta)_{ij}(A_\alpha)_{rj}G_{rikl}.
$$
For the first term, interchanging \(i\) and \(j\), and using the symmetry
of \(A_\alpha\) and \(A_\beta\), gives the same expression:
$$
\begin{aligned}
\sum_{i,j,r}
(A_\beta)_{ij}(A_\alpha)_{ir}G_{rjkl}
&=
\sum_{i,j,r}
(A_\beta)_{ji}(A_\alpha)_{jr}G_{rikl}\\
&=
\sum_{i,j,r}
(A_\beta)_{ij}(A_\alpha)_{rj}G_{rikl}.
\end{aligned}
$$
Hence the two terms in the commutator \([A_\alpha,G^{kl}]\) give equal
contributions.
Since
$$
\mathcal N_{\alpha ijkl}
=
\sum_\beta
(A_\beta)_{ij}(C_{\beta\alpha})_{kl},
\qquad
C_{\beta\alpha}=-C_{\alpha\beta},
$$
we obtain
$$
\begin{aligned}
\sum_{\alpha,k,l}\ip{[A_\alpha,G^{kl}]}{\mathcal N_\alpha^{kl}}
&=
2\sum_{\alpha,\beta,k,l}
\left[
\sum_\gamma
(A_\gamma C_{\alpha\beta}A_\gamma)_{kl}
\right]
(C_{\beta\alpha})_{kl}\\
&=
-2\sum_{\alpha,\beta,\gamma}
\left\langle
A_\gamma C_{\alpha\beta}A_\gamma,
C_{\alpha\beta}
\right\rangle.
\end{aligned}
$$
Combining this with
$$
|\mathcal N|^2=p
$$
yields
$$
\mathscr P
=
p-
2\sum_{\alpha,\beta,\gamma}
\left\langle
A_\gamma C_{\alpha\beta}A_\gamma,
C_{\alpha\beta}
\right\rangle,
$$
which proves \eqref{eq:normal-contraction-new}.

Fix $\alpha<\beta$ and diagonalize $A_\alpha$ with eigenvalues
$\lambda_i$. Write $A_\beta=(b_{ij})$. For $i\ne j$,
\[
 (C_{\alpha\beta})_{ij}=(\lambda_i-\lambda_j)b_{ij},\qquad
 \lambda_i\lambda_j\le
       \frac{x_\alpha-(\lambda_i-\lambda_j)^2}{2}.
\]
The diagonal entries of $C_{\alpha\beta}$ are zero. Hence
\[
\begin{split}
 \ip{A_\alpha C_{\alpha\beta}A_\alpha}{C_{\alpha\beta}}
 &\le\frac{x_\alpha U_{\alpha\beta}}2
       -\frac12\sum_{i,j}(\lambda_i-\lambda_j)^4b_{ij}^2\\
 &\le\frac{x_\alpha U_{\alpha\beta}}2
       -\frac{U_{\alpha\beta}^2}{2x_\beta}.
\end{split}
\]
The last inequality follows from
\[
 \left(\sum_{i,j}(\lambda_i-\lambda_j)^2b_{ij}^2\right)^2
 \le \left(\sum_{i,j}b_{ij}^2\right)
       \left(\sum_{i,j}(\lambda_i-\lambda_j)^4b_{ij}^2\right).
\]
Interchanging $\alpha,\beta$ gives the analogous estimate. For any other
$\gamma$, diagonalizing $A_\gamma$ gives the upper bound
$x_\gamma U_{\alpha\beta}/2$, because an off-diagonal eigenvalue
product is at most $|A_\gamma|^2/2$.
Substituting these estimates into \eqref{eq:normal-contraction-new}, with
both orders of each normal pair retained, yields
\[
 \mathscr P\ge p-J+
 2\sum_{\alpha<\beta}
       \frac{(x_\alpha+x_\beta)U_{\alpha\beta}^2}{x_\alpha x_\beta}.
\]
If at least two eigenvalues of the fundamental matrix are positive, then
\[
 u-v=\sum_\alpha x_\alpha^2(1-x_\alpha)>0,
\]
and weighted Cauchy--Schwarz gives
\[
\begin{split}
 p^2
 &=\left(\sum_{\alpha<\beta}
 \sqrt{(x_\alpha+x_\beta)x_\alpha x_\beta}\,
 \sqrt{\frac{x_\alpha+x_\beta}{x_\alpha x_\beta}}
 U_{\alpha\beta}\right)^2\\
 &\le(u-v)\sum_{\alpha<\beta}
       \frac{(x_\alpha+x_\beta)U_{\alpha\beta}^2}{x_\alpha x_\beta}.
\end{split}
\]
Therefore
\[
 \mathscr P\ge p-J+\frac{2p^2}{u-v}\ge p-J\ge-1.
\]
If the fundamental matrix has only one positive eigenvalue, then $u=1$, $J=0$,
$\mathcal N=0$, and $D=1$, so $\mathscr P=0\ge-1$ directly.
This is the only case in which $u-v=0$: each retained weight is
positive and at most one, so
$\sum_\alpha x_\alpha^2(1-x_\alpha)=0$ forces every retained
$x_\alpha$ to equal one, and their sum is one.

When $D<1$, put $a=(1-D)/2$. Then $0<a\le1/2$, $p\ge a$,
$J=1+a-u$, and $u\ge a$ by \eqref{eq:ddvv-new}. For fixed $J,u,v$,
the expression $p-J+2p^2/(u-v)$ increases with $p\ge0$. Since $p\ge a$
and $J=1+a-u$, this proves \eqref{eq:scalar-reduction-new}. In particular
$u<1$, since $u=1$ would force the fundamental matrix to have rank one
and hence $D=1$, contrary to the present assumption $D<1$. Repeated positive
eigenvalues of the fundamental matrix cause no difficulty, since $\mathcal A$ is
diagonalized only pointwise and the resulting normal frame is never differentiated.

\end{proof}

\subsection{Moment estimates}

\begin{lemma}\label{lem:moment-bounds}
Let $x_\alpha>0$ satisfy $\sum_\alpha x_\alpha=1$, and set
\[
 u=\sum_\alpha x_\alpha^2,\qquad v=\sum_\alpha x_\alpha^3.
\]
Then
\begin{equation}\label{eq:coarse-moment-new}
 u-v\le u(1-u).
\end{equation}
Moreover, if $1/3\le u\le1/2$, then
\begin{equation}\label{eq:sharp-moment-new}
 u-v\le R(u):=\frac{8+(6u-2)^{3/2}}{36},
\end{equation}
and the function $1/R$ is concave on $[1/3,1/2]$.
\end{lemma}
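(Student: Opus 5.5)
The bound \eqref{eq:coarse-moment-new} comes from Cauchy--Schwarz. The bound \eqref{eq:sharp-moment-new} requires identifying the minimum of $v$ for fixed $(\sum x_\alpha,u)$, which is the main work. The concavity of $1/R$ is a one-variable computation.

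\emph{Coarse bound.} The inequality \eqref{eq:coarse-moment-new} is equivalent to $v\ge u^2$. Writing $x_\alpha^2=x_\alpha^{1/2}x_\alpha^{3/2}$ and applying Cauchy--Schwarz gives
\[
u^2=\Big(\sum_\alpha x_\alpha^{1/2}x_\alpha^{3/2}\Big)^2\le\sum_\alpha x_\alpha\cdot\sum_\alpha x_\alpha^3=v .
\]

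\emph{Sharp bound: reduction to two values.} For $1/3\le u\le 1/2$ we must show that $v\ge u-R(u)$ on the constraint set $\{\sum x_\alpha=1,\ \sum x_\alpha^2=u,\ x_\alpha\ge0\}$. Closing the constraint set is harmless, since zero entries do not change $u$ or $v$. The set is compact, so a minimizer of $v$ exists. At a minimizer, the Lagrange conditions on the positive entries read $3x_\alpha^2=\lambda+2\mu x_\alpha$. A quadratic has at most two roots, so the positive entries take at most two distinct values.

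We then parametrize the candidates by multiplicities: $m$ entries equal to $s$ and $k$ entries equal to $t$, with $ms+kt=1$ and $ms^2+kt^2=u$. The expected extremal configuration is $(t,t,1-2t)$ with $t\ge1/3$. For it,
\[
u=6t^2-4t+1,\qquad 6u-2=(6t-2)^2,
\]
so $t=\tfrac13+\tfrac16\sqrt{6u-2}$. A direct expansion should then give
\[
u-v=\frac{8+(6u-2)^{3/2}}{36}.
\]
As checks, this matches $2/9$ at $u=1/3$ (three equal entries) and $1/4$ at $u=1/2$ (two equal entries).

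\emph{Main obstacle: comparing multiplicity patterns.} The hard step is showing that no other pattern $(m,k)$ gives a larger value of $u-v$ on $[1/3,1/2]$. First, $u\le1/2$ forces at least two positive entries, and $u\ge1/3$ forces the pattern to be "few large, possibly many small".

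I would first try a second-order (Hessian) argument on the tangent space of the constraints. At a minimizer of $v$, the smaller value can be repeated, but the larger value $t$ cannot appear more than twice, and more than one small entry is excluded when $u\ge1/3$. Here the bordered Hessian of $\sum x_\alpha^3$, namely $6x_\alpha$ restricted to directions orthogonal to $(1)$ and $(x_\alpha)$, must be positive semidefinite.

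If that fails, the fallback is to treat each remaining pattern $(m,k)$ explicitly as a function of $u$. Each is a one-parameter family, so the comparison reduces to elementary inequalities between square-root expressions on $[1/3,1/2]$.

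\emph{Concavity of $1/R$.} Put $w=\sqrt{6u-2}\in[0,1]$, so that $u=(w^2+2)/6$ and $du=w\,dw/3$. Then
\[
R'(u)=\frac{w}{4},\qquad R''(u)=\frac{3}{4w}.
\]
Since $(1/R)''=(2R'^2-RR'')/R^3$, concavity is equivalent to $2R'^2\le RR''$, that is,
\[
\frac{w^2}{8}\le\frac{8+w^3}{48w},
\qquad\text{equivalently}\qquad 5w^3\le 8,
\]
which holds for $w\le1$. The endpoint $w=0$ is handled by continuity, since $1/R$ is $C^1$ there with $(1/R)'$ nonincreasing.
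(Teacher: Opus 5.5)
Your overall route matches the paper's: Cauchy--Schwarz for $v\ge u^2$, compactness plus Lagrange multipliers to reduce the minimizer of $v$ to $(x,x,y)$ with $x=(2+r)/6$, and the criterion $2(R')^2\le RR''\iff 5r^3\le 8$ for concavity. Your explicit computations are correct.

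The gap is in the step you flag as the main obstacle. You propose to use positive semidefiniteness of the Hessian of $\sum_\alpha x_\alpha^3$ alone, $\operatorname{diag}(6x_\alpha)$, on the tangent space. Since all $x_\alpha>0$, this form is positive definite, so the condition holds automatically and excludes nothing. The correct second-order condition uses the Hessian of the Lagrangian $\sum x_\alpha^3-\lambda(\sum x_\alpha-1)-\mu(\sum x_\alpha^2-u)$, i.e.\ $\operatorname{diag}(6x_\alpha-2\mu)$, where the $-2\mu$ comes from the curvature of the quadratic constraint. With the two values $x>y$, the first-order condition gives $2\mu=3(x+y)$. The diagonal entry is then $3(x-y)>0$ at the larger value and $3(y-x)<0$ at the smaller one. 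Evaluating on $e_i-e_j$ for two positions carrying $y$ (a vector tangent to both constraints) gives $6(y-x)<0$. So at a minimizer the \emph{smaller} value occurs exactly once---the opposite of your assertion that it ``can be repeated.''

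The multiplicity of the larger value is fixed by the range of $u$, not by the Hessian. With $k$ copies of $x$ and one $y$, $kx+y=1$ and $0<y<x$ give $u=(1-y)^2/k+y^2\in\bigl(1/(k+1),1/k\bigr)$, which forces $k=2$. This is also what justifies your choice of the branch $t\ge1/3$: the other branch repeats the smaller value and gives only $u-v=(8-r^3)/36$.

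Your fallback of checking each pattern $(m,k)$ by hand is not a finite verification. For every $m\ge2$ there are admissible vectors with one or two large entries, $m$ equal small entries, and $u\in(1/3,1/2)$, e.g.\ $(0.6,0.4/m,\dots,0.4/m)$. You would need an argument uniform in $m$, which is exactly what the corrected second-order condition provides.

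Two smaller points. First, the Lagrange and second-order conditions require independent constraint gradients. This holds for $1/3<u<1/2$ because the positive entries cannot all be equal ($u$ is not the reciprocal of an integer). Second, at the endpoints $u=1/3$ and $u=1/2$ this regularity fails. There the sharp bound should be derived from the coarse one, since $u(1-u)=R(u)$ at both points ($2/9$ and $1/4$), rather than merely checked on the extremal configuration.
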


\begin{proof}
Cauchy--Schwarz gives
\[
 u^2=\left(\sum_\alpha x_\alpha^{1/2}x_\alpha^{3/2}\right)^2
 \le\left(\sum_\alpha x_\alpha\right)
      \left(\sum_\alpha x_\alpha^3\right)=v.
\]
Hence $u-v\le u-u^2=u(1-u)$, proving
\eqref{eq:coarse-moment-new}.

We next prove \eqref{eq:sharp-moment-new}. Fix a finite number $m$ of
entries and a feasible value $u\in(1/3,1/2)$. The set
\[
 \left\{(x_1,\ldots,x_m)\in[0,1]^m:
       \sum_{\alpha=1}^m x_\alpha=1,\quad
       \sum_{\alpha=1}^m x_\alpha^2=u\right\}
\]
is compact, so $\sum_{\alpha=1}^m x_\alpha^3$ attains a minimum.
Remove the zero entries of a minimizer and work on its positive support.
The support has at least three entries, because a vector supported on at
most two entries has second moment at least $1/2$. Its positive entries
cannot all be equal, since no reciprocal of an integer lies strictly
between $1/3$ and $1/2$. Therefore the gradients of the two constraints
are independent on the positive support.

Lagrange multipliers give numbers $\lambda,\mu$ such that every positive
entry satisfies
\[
 3x_\alpha^2-2\mu x_\alpha-\lambda=0.
\]
Hence there are exactly two distinct positive values, say $x>y>0$, and
$2\mu=3(x+y)$. The smaller value occurs only once. Indeed, if two
positions had value $y$, the vector with entries $1,-1$ in those
positions and zero elsewhere would be tangent to both constraints. For
the Lagrangian
$\sum x_\alpha^3-\lambda(\sum x_\alpha-1)
 -\mu(\sum x_\alpha^2-u)$, its Hessian on this vector is
\[
 2(6y-2\mu)=6(y-x)<0,
\]
contradicting the second-order necessary condition at a minimum. The
constraint gradients are independent, so the constraint set is a smooth
manifold near the minimizer and sufficiently short tangent curves preserve
positivity of all support entries.

If $x$ occurs $k$ times, then $kx+y=1$ and $0<y<x$ imply
$0<y<1/(k+1)$. The function
\[
 \frac{(1-y)^2}{k}+y^2
\]
is strictly decreasing on this interval, with endpoint values $1/k$ and
$1/(k+1)$. Thus $1/(k+1)<u<1/k$, so $k=2$. Put
$r=\sqrt{6u-2}$. Up to zero padding, the minimizing vector is
\[
 x,x,y,\qquad x=\frac{2+r}{6},\qquad y=\frac{1-r}{3}.
\]
A direct calculation gives
\[
 u=\frac{2+r^2}{6},\qquad
 v=2x^3+y^3=\frac{4+6r^2-r^3}{36},\qquad
 u-v=\frac{8+r^3}{36}=R(u).
\]
This proves \eqref{eq:sharp-moment-new} for $1/3<u<1/2$. At
$u=1/3$ and $u=1/2$, the coarse estimate gives the same endpoint values
$R(1/3)=2/9$ and $R(1/2)=1/4$. The argument applies to every finite
support size and hence to every finite codimension.

For $1/3<u<1/2$, differentiation gives
\[
 R'=\frac r4,\qquad R''=\frac{3}{4r},\qquad
 2(R')^2-RR''=\frac{5r^3-8}{48r}<0.
\]
Since $R>0$,
\[
 \left(\frac1R\right)''
 =\frac{2(R')^2-RR''}{R^3}<0.
\]
Thus $1/R$ is concave on $(1/3,1/2)$, and by continuity on the closed
interval as well.

\end{proof}

\subsection{Verification of the \texorpdfstring{$n=3,4,5$}{n=3,4,5}  parameter choices}\label{subsec:small-dimensions}
We verify the first three parameter choices in
Proposition~\ref{prop:normal-tangents}. For each of them,
\[
 \beta>\frac{11}{4},\qquad 2\beta+\eta>6,\qquad
 \alpha<\frac{1+\beta}{3},\qquad \alpha-\beta<-1.
\]
For reference, the two margins not involving $\eta$ are
\[
\begin{array}{c|cc}
\text{dimension used}&(1+\beta)/3-\alpha&\beta-\alpha-1\\ \hline
n=3&2/225&77/75\\
n=4&1/200&201/200\\
n=5&1/300&97/100
\end{array}
\]
The remaining two inequalities follow immediately from the parameters in
Proposition~\ref{prop:normal-tangents}.

Normalize $S=1$ as in Lemma~\ref{lem:normalized-scalar-reduction}. If
$D\ge1$, then Lemma~\ref{lem:normalized-scalar-reduction} gives
$\mathscr P\ge-1$ and $J\le1$, so
\[
 \mathscr P+\eta(1-J)\ge-1>
 \alpha-\beta\ge\alpha-\beta D.
\]
Thus only $0\le D<1$ remains. Put
\[
 a=\frac{1-D}{2},\qquad\text{so that}\qquad D=1-2a.
\]
Lemma~\ref{lem:normalized-scalar-reduction} gives
\[
 \mathscr P\ge u-1+\frac{2a^2}{u-v},\qquad
 1-J=u-a,\qquad
 0<a\le\min\{u,1/2\}.
\]
Consequently,
\[
 \mathscr P+\eta(1-J)
 \ge u-1+\eta(u-a)+\frac{2a^2}{u-v}.
\]
On the other hand,
\[
 \alpha-\beta D=\alpha-\beta+2\beta a.
\]
Therefore, to prove
$\mathscr P+\eta(1-J)\ge\alpha-\beta D$, it suffices to show
\begin{equation*}
 u-1+\eta(u-a)+\frac{2a^2}{u-v}
 -\alpha+\beta-2\beta a\ge0,
 \qquad 0<a\le\min\{u,1/2\}.
\end{equation*}
We now use Lemma~\ref{lem:moment-bounds} to reduce the verification to
three cases.  On the low range $0<a\le u\le1/3$, we use the coarse
bound \eqref{eq:coarse-moment-new} directly.  On the middle interval
\[
 \max\{a,1/3\}\le u\le1/2,
\]
the sharp bound \eqref{eq:sharp-moment-new}, together with the concavity
of $1/R$, shows that the corresponding lower bound is minimized at an
endpoint.  If $a\ge1/3$, the left endpoint is $u=a$; if $a\le1/3$, it is
$u=1/3$, which is already covered by the low range.  The right endpoint
is always $u=1/2$.  Finally, on $1/2\le u<1$, the lower bound obtained
from \eqref{eq:coarse-moment-new} is increasing in $u$, since its
$u$-derivative is
\[
 1+\eta+\frac{2a^2(2u-1)}{u^2(1-u)^2}>0.
\]
Thus this range is also controlled by $u=1/2$, and it remains to verify
the following three cases.

\smallskip
\noindent\emph{Case 1: $0<a\le u\le1/3$.}
By \eqref{eq:coarse-moment-new}, the normalized residual is bounded below by
\[
 u-1+\eta(u-a)+\frac{2a^2}{u(1-u)}
 -\alpha+\beta-2\beta a.
\]
For fixed $u$, its derivative with respect to $a$ is
\[
 \frac{4a}{u(1-u)}-(2\beta+\eta)
 \le\frac4{1-u}-(2\beta+\eta)
 \le6-(2\beta+\eta)<0.
\]
Hence the minimum over $0<a\le u$ occurs at $a=u$. There the lower bound
becomes
\[
 u-1+\frac{2u}{1-u}-\alpha+\beta-2\beta u,
\]
and its derivative with respect to $u$ satisfies
\[
 1-2\beta+\frac2{(1-u)^2}
 \le\frac{11}{2}-2\beta<0.
\]
Therefore the minimum in this range is attained at $a=u=1/3$ and equals
\[
 \frac{1+\beta}{3}-\alpha>0.
\]
This also verifies the endpoint $u=1/3$ when $a\le1/3$.

\smallskip
\noindent\emph{Case 2: $u=a\in[1/3,1/2]$.}
Put $r=\sqrt{6a-2}\in[0,1]$. By \eqref{eq:sharp-moment-new},
\[
 a-1+\frac{2a^2}{R(a)}
 =7a-2-\frac{r^3(1-r)^2}{8+r^3}.
\]
After subtracting the target $\alpha-\beta+2\beta a$ and multiplying by
the positive factor $8+r^3$, it remains to prove
\begin{equation}\label{eq:small-middle-residual}
 \left[\frac{\beta-7/2}{3}(1-r^2)+\frac32-\alpha\right]
 (8+r^3)-r^3(1-r)^2\ge0.
\end{equation}
We now substitute the three parameter choices.

For $n=3$, the left-hand side of
\eqref{eq:small-middle-residual} equals
\[
 (1-r)\left[\frac2{225}(1+r)(8+r^3)-r^3(1-r)\right].
\]
On $0\le r\le1/2$, the bracket is at least
\[
 \frac{16}{225}-\frac1{16}=\frac{31}{3600}>0,
\]
whereas on $1/2\le r\le1$ it is at least
\[
 \frac{24}{225}-\frac{27}{256}=\frac{23}{19200}>0.
\]
Indeed, $[r^3(1-r)]'=r^2(3-4r)$, so the relevant maxima are $1/16$
and $27/256$. Thus the numerator is positive for $r<1$ and vanishes
only at $r=1$.

For $n=4$, the left-hand side of
\eqref{eq:small-middle-residual} satisfies
\[
 \frac{8+r^3}{200}-r^3(1-r)^2
 \ge\frac8{200}-\frac{108}{3125}
 =\frac{17}{3125}>0,
\]
because $r^3(1-r)^2$ has its maximum $108/3125$ at $r=3/5$.

For $n=5$, it satisfies
\[
\begin{split}
 &\frac{(1+5r^2)(8+r^3)}{300}-r^3(1-r)^2\\
 &\qquad\ge\frac2{75}+\frac2{15}r^2-\frac4{27}r^2
 =\frac2{75}-\frac2{135}r^2
 \ge\frac8{675}>0,
\end{split}
\]
since $r(1-r)^2\le4/27$ on $[0,1]$.

\smallskip
\noindent\emph{Case 3: $u=1/2$.}
Using $u-v\le1/4$, the normalized residual is bounded below by
\[
 8a^2-(\eta+2\beta)a-\frac12+\frac\eta2-\alpha+\beta.
\]
For the three parameter choices, completing the square gives
\[
\begin{array}{c|c}
\text{dimension used}&\text{residual at }u=1/2\\ \hline
n=3&8(a-1/2)^2\\
n=4&8(a-19/40)^2\\
n=5&8(a-37/80)^2+7/800
\end{array}
\]
so this endpoint is nonnegative in every case.

It remains only to record the strictness for the $n=3$ parameters. If
$0<D<1$, then $a<1/2$. Case~1 is already strict; in Case~2 the only
possible zero is $r=1$, equivalently $a=1/2$; and in Case~3 the only
possible zero is again $a=1/2$. Hence the normalized residual is strictly
positive whenever $0<D<1$. The case $D\ge1$ was already strict above,
while for $D=0$ all estimates remain nonnegative. Restoring the degree-six
homogeneity proves the parameter choices for $n=3,4,5$ in
\eqref{eq:normal-tangent}, including the asserted strictness for $n=3$.

\subsection{Verification for \texorpdfstring{$n\ge6$}{n >= 6} parameter choices}\label{subsec:tail}
We now verify the fourth parameter choice
\[
 (\alpha,\beta,\eta)=\left(\frac{13}{9},\frac{10}{3},0\right).
\]
Equivalently, it remains to establish
\begin{equation}\label{eq:tail-tangent-new}
 \mathscr P\ge\frac{13}{9}S^3-\frac{10}{3}SD.
\end{equation}
Normalize $S=1$ and use Lemma~\ref{lem:normalized-scalar-reduction}.
If $D\ge1$, then
\[
 \frac{13}{9}-\frac{10}{3}D\le-\frac{17}{9}<-1\le\mathscr P,
\]
so only $D<1$ remains. Put
\[
 a=\frac{1-D}{2},\qquad D=1-2a.
\]
Lemma~\ref{lem:normalized-scalar-reduction} gives
\[
 \mathscr P\ge u-1+\frac{2a^2}{u-v},
 \qquad 0<a\le\min\{u,1/2\}.
\]
On the other hand, the right-hand side of \eqref{eq:tail-tangent-new}, after
normalizing $S=1$, becomes
\[
 \frac{13}{9}-\frac{10}{3}D
 =\frac{13}{9}-\frac{10}{3}(1-2a)
 =-\frac{17}{9}+\frac{20}{3}a.
\]
Consequently, it is enough to prove
\[
 u-1+\frac{2a^2}{u-v}
 \ge -\frac{17}{9}+\frac{20}{3}a,
\]
or, equivalently,
\begin{equation}\label{eq:tail-scalar-target}
 u+\frac89-\frac{20}{3}a+\frac{2a^2}{u-v}\ge0,
 \qquad0<a\le\min\{u,1/2\}.
\end{equation}
Here it is more convenient to fix $u$ and minimize the resulting convex
quadratic in $a$.

\smallskip
\noindent\emph{Case 1: $u\le1/3$.}
By \eqref{eq:coarse-moment-new}, the left-hand side of
\eqref{eq:tail-scalar-target} is bounded below by
\[
 u+\frac89-\frac{20}{3}a+\frac{2a^2}{u(1-u)}.
\]
As a quadratic in $a$, its unconstrained minimizer is
\[
 a_*=\frac53u(1-u)\ge u.
\]
Since the admissible interval is $0<a\le u$, the constrained minimum is
attained at $a=u$ and equals
\[
 \frac{(3u-1)(17u-8)}{9(1-u)}\ge0,
\]
because both numerator factors are nonpositive when $u\le1/3$.

\smallskip
\noindent\emph{Case 2: $u\ge1/2$.}
Using \eqref{eq:coarse-moment-new} again, the same quadratic has global
minimum
\[
 \frac{(2u-1)(25u-8)}9\ge0.
\]
Therefore its minimum on the smaller admissible interval
$0<a\le1/2$ is also nonnegative.

\smallskip
\noindent\emph{Case 3: $1/3\le u\le1/2$.}
Put $r=\sqrt{6u-2}\in[0,1]$. By \eqref{eq:sharp-moment-new}, it suffices
to minimize
\[
 u+\frac89-\frac{20}{3}a+\frac{2a^2}{R(u)}
\]
over $0<a\le u$. Its unconstrained minimizer is
\[
 a_*=\frac53R(u).
\]
There are two possibilities.

If $a_*\le u$, equivalently
\[
 5r^3-18r^2+4\le0,
\]
then $a_*$ is admissible. This condition forces $r>1/2$, because the
polynomial on the left is decreasing on $[0,1/2]$ and equals $1/8$ at
$r=1/2$. Substituting $a=a_*$ gives the minimum value
\[
 \frac{(1-r)(25r^2-2r-2)}{162}\ge0.
\]
Indeed, $25r^2-2r-2$ is increasing for $r\ge1/2$ and equals $13/4$
at $r=1/2$.

If $a_*>u$, the vertex lies to the right of the admissible interval, so
the constrained minimum occurs at $a=u$. Direct substitution gives
\[
 \frac{r^2(8-18r+36r^2-17r^3)}{18(8+r^3)}\ge0.
\]
To see the last inequality, note that for $0\le r\le1$,
\[
 8-18r+36r^2-17r^3
 \ge8-18r+19r^2>0,
\]
and the final quadratic has discriminant $-284$. This also covers the
endpoints $u=1/3$ and $u=1/2$.

The three cases prove \eqref{eq:tail-scalar-target}. Restoring homogeneity
proves \eqref{eq:tail-tangent-new} and completes the proof of
Proposition~\ref{prop:normal-tangents}.

\subsection{The trace-free refinement}\label{subsec:tracefree}

\begin{proof}[\textbf{Proof of Proposition~\ref{prop:tracefree-tangent}}]
If $S=0$, every matrix vanishes and the assertion is immediate. Assume
$S>0$ and normalize $S=1$ as in
Lemma~\ref{lem:normalized-scalar-reduction}. We keep the notation
$x_\alpha,u,v,U_{\alpha\beta},p,J,D$ from that lemma after diagonalizing
Lu's fundamental matrix $\mathcal A$ and omitting its zero eigenvalues.
The retained matrices remain trace-free because orthogonal normal
combinations preserve trace.

\smallskip
\noindent\emph{Step 1: the trace-free eigenvalue estimate.}
Let $A$ be a trace-free symmetric $n\times n$ matrix with eigenvalues
$\lambda_1,\ldots,\lambda_n$. For $i\ne j$, trace-freeness gives
$\lambda_i+\lambda_j=-\sum_{k\ne i,j}\lambda_k$, and hence
Cauchy--Schwarz yields
\[
 (\lambda_i+\lambda_j)^2
 \le(n-2)\sum_{k\ne i,j}\lambda_k^2.
\]
Equivalently,
\[
 n(\lambda_i+\lambda_j)^2
 +(n-2)(\lambda_i-\lambda_j)^2
 \le2(n-2)|A|^2.
\]
Using
$4\lambda_i\lambda_j=(\lambda_i+\lambda_j)^2
-(\lambda_i-\lambda_j)^2$, we obtain
\begin{equation}\label{eq:tracefree-eigenvalues}
 \lambda_i\lambda_j
 \le\frac{n-2}{2n}|A|^2
    -\frac{n-1}{2n}(\lambda_i-\lambda_j)^2.
\end{equation}
This is the only point in the argument where trace-freeness is used.

\smallskip
\noindent\emph{Step 2: the refined contraction estimate.}
Write $C_{\alpha\beta}=[A_\alpha,A_\beta]$ and fix
$\alpha<\beta$. Diagonalize $A_\alpha$ and write
$A_\beta=(b_{ij})$. Since
$(C_{\alpha\beta})_{ij}=(\lambda_i-\lambda_j)b_{ij}$,
\eqref{eq:tracefree-eigenvalues} gives
\[
\begin{split}
 \langle A_\alpha C_{\alpha\beta}A_\alpha,C_{\alpha\beta}\rangle
 &\le\frac{n-2}{2n}x_\alpha U_{\alpha\beta}
   -\frac{n-1}{2n}\sum_{i,j}(\lambda_i-\lambda_j)^4b_{ij}^2\\
 &\le\frac{n-2}{2n}x_\alpha U_{\alpha\beta}
   -\frac{n-1}{2n}\frac{U_{\alpha\beta}^2}{x_\beta}.
\end{split}
\]
The last step is Cauchy--Schwarz, using
$\sum_{i,j}b_{ij}^2=x_\beta>0$. Interchanging $\alpha$ and $\beta$
gives the analogous estimate with $x_\alpha$ in the denominator. If
$\gamma\notin\{\alpha,\beta\}$, diagonalizing $A_\gamma$ and dropping
the nonpositive difference term in \eqref{eq:tracefree-eigenvalues}
gives
\[
 \langle A_\gamma C_{\alpha\beta}A_\gamma,C_{\alpha\beta}\rangle
 \le\frac{n-2}{2n}x_\gamma U_{\alpha\beta}.
\]
Since $\sum_\gamma x_\gamma=1$, the contribution of this fixed unordered
pair satisfies
\[
\begin{split}
 \sum_\gamma
 \langle A_\gamma C_{\alpha\beta}A_\gamma,C_{\alpha\beta}\rangle
 &\le\frac{n-2}{2n}U_{\alpha\beta}\\
 &\quad-\frac{n-1}{2n}
 \frac{(x_\alpha+x_\beta)U_{\alpha\beta}^2}
 {x_\alpha x_\beta}.
\end{split}
\]
Now use the exact identity \eqref{eq:normal-contraction-new}. Both ordered
pairs $(\alpha,\beta)$ and $(\beta,\alpha)$ occur in its triple sum, so
the prefactor $-2$ produces a factor $-4$ for each unordered pair.
Because $J=2\sum_{\alpha<\beta}U_{\alpha\beta}$, we obtain
\begin{equation}\label{eq:tracefree-raw}
 \mathscr P\ge p-\frac{n-2}{n}J
 +\frac{2(n-1)}{n}\sum_{\alpha<\beta}
   \frac{(x_\alpha+x_\beta)U_{\alpha\beta}^2}{x_\alpha x_\beta}.
\end{equation}
Thus the trace-free information has been converted into a strengthened
scalar lower bound for $\mathscr P$.

\smallskip
\noindent\emph{Step 3: the case $D\ge1$.}
By \eqref{eq:tracefree-raw}, $p\ge0$, and
Lemma~\ref{lem:normalized-scalar-reduction},
\[
 \mathscr P\ge-\frac{n-2}{n}.
\]
On the other hand,
\[
 \alpha_n-\sqrt6D
 \le-\frac43+\frac{137}{144n}
 <-1+\frac2n=-\frac{n-2}{n},
\]
and the strict margin in the middle inequality is
$1/3+151/(144n)>0$. Hence the desired inequality is strict when
$D\ge1$. This also includes the rank-one case, for which
$u=1$, $J=p=0$, and $D=1$.

\smallskip
\noindent\emph{Step 4: the case $0\le D<1$.}
Put $a=(1-D)/2$. By
Lemma~\ref{lem:normalized-scalar-reduction},
\[
 0<a\le u<1,\qquad J=1+a-u,\qquad p\ge a,
\]
and
\[
 \sum_{\alpha<\beta}
 \frac{(x_\alpha+x_\beta)U_{\alpha\beta}^2}{x_\alpha x_\beta}
 \ge\frac{p^2}{u-v}.
\]
Moreover, Lemma~\ref{lem:moment-bounds} gives
$u-v\le u(1-u)$. Since the right-hand side of
\eqref{eq:tracefree-raw} is increasing as a function of $p\ge0$, these
estimates yield
\begin{equation}\label{eq:tracefree-au-reduction}
 n\mathscr P\ge
 2a+(n-2)(u-1)+\frac{2(n-1)a^2}{u(1-u)}.
\end{equation}
All denominators are positive because $0<u<1$.

We first complete the square in $a$. Since $D=1-2a$, adding
$n\sqrt6D$ to \eqref{eq:tracefree-au-reduction} gives
\begin{equation}\label{eq:tracefree-square-a}
\begin{split}
 n(\mathscr P+\sqrt6D)
 &\ge\frac{2(n-1)}{u(1-u)}
   \left(a-\frac{(n\sqrt6-1)u(1-u)}{2(n-1)}\right)^2\\
 &\quad+(n-2)(u-1)+n\sqrt6
   -\frac{(n\sqrt6-1)^2}{2(n-1)}u(1-u).
\end{split}
\end{equation}
The square is nonnegative, so no information about the location of its
vertex is needed. For $n\ge3$,
\[
\begin{split}
 \frac{(n\sqrt6-1)^2}{2(n-1)}
 &=3n+3-\sqrt6+\frac{7/2-\sqrt6}{n-1}\\
 &\le3n+\frac{19}{4}-\frac{3\sqrt6}{2}.
\end{split}
\]
Set
\[
 c_0:=\frac{19}{4}-\frac{3\sqrt6}{2}>0.
\]
Discarding the square in \eqref{eq:tracefree-square-a}, using the last
bound, and then $u(1-u)\le1/4$, we obtain
\begin{equation}\label{eq:tracefree-square-u}
\begin{split}
 &n\left(\mathscr P+\sqrt6D-\sqrt6+\frac43\right)\\
 &\quad\ge
 3n\left(u-\frac13\right)^2
 -2\left(u-\frac13\right)+\frac43-c_0u(1-u)\\
 &\quad\ge
 3n\left(u-\frac13\right)^2
 -2\left(u-\frac13\right)+\frac43-\frac{c_0}{4}.
\end{split}
\end{equation}
We now complete the square in $u$:
\[
 3n\left(u-\frac13\right)^2-2\left(u-\frac13\right)
 =3n\left(u-\frac13-\frac1{3n}\right)^2-\frac1{3n}.
\]
Since $n\ge3$, \eqref{eq:tracefree-square-u} therefore gives
\[
\begin{split}
 n\left(\mathscr P+\sqrt6D-\sqrt6+\frac43\right)
 &\ge\frac43-\frac1{3n}-\frac{c_0}{4}\\
 &\ge\frac5{144}+\frac{3\sqrt6}{8}
 >\frac{137}{144}.
\end{split}
\]
For the last strict inequality we use $\sqrt6>22/9$:
\[
 \frac5{144}+\frac{3\sqrt6}{8}
 >\frac5{144}+\frac{11}{12}
 =\frac{137}{144}.
\]
Consequently,
\[
 \mathscr P>
 \sqrt6-\frac43+\frac{137}{144n}-\sqrt6D
 =\alpha_n-\sqrt6D
\]
whenever $0\le D<1$.

Combining Steps~3 and~4 proves the normalized inequality, with strictness
for every nonzero tuple. Restoring the original scale gives
\eqref{eq:tracefree-tangent}. As in
Lemma~\ref{lem:normalized-scalar-reduction}, repeated positive
eigenvalues of the fundamental matrix cause no difficulty because the
normal diagonalization is only pointwise and the chosen frame is never
differentiated.
\end{proof}

\medskip
\noindent\textbf{Declaration of generative AI use.}
During the preparation and revision of this manuscript, the author used OpenAI's ChatGPT, including its Codex tools, as an assistive tool for English-language polishing, \LaTeX{} formatting, symbolic and numerical computations, internal-consistency checks, and the exploration and verification of certain algebraic inequality estimates. AI-generated suggestions and computations were used only as auxiliary input and were independently checked before being incorporated into the manuscript. All arguments, proofs, constants, and conclusions appearing in the final version were independently verified by the author, who assumes full responsibility for the mathematical content.

\begin{acknow}
The authors are grateful to Prof.  Jianquan Ge and Dr. Weiran Ding for their encouragement, support, and valuable discussions.
\end{acknow}

\end{document}